\DeclareMathOperator{\re}{\mathrm{Re}}
\DeclareMathOperator{\imag}{\mathrm{Im}}
\DeclareMathOperator{\se}{\mathcal{S}^*_e}
\DeclareMathOperator{\ke}{\mathcal{K}_e}
\numberwithin{equation}{section}
\newtheorem{theorem}{Theorem}[section]
\newtheorem{lemma}[theorem]{Lemma}
\newtheorem{corollary}[theorem]{Corollary}
\theoremstyle{remark}
\newtheorem{example}[theorem]{Example}
\newtheorem{definition}[theorem]{Definition}
\begin{document}

\title[Geometric Properties of Generalized Bessel Function]{Geometric Properties of Generalized Bessel Function associated with the Exponential Function}
\author[Adiba Naz]{Adiba Naz}

\address{Department of Mathematics, University of Delhi, 	Delhi--110 007, India}
\email{adibanaz81@gmail.com}

\author[Sumit Nagpal]{Sumit Nagpal}
\address{Department of Mathematics, Ramanujan College, University of Delhi,	Delhi--110 019, India}
\email{sumitnagpal.du@gmail.com}

\author[V. Ravichandran]{V. Ravichandran}

\address{Department of Mathematics, National Institute of Technology,  Tiruchirappalli--620 015,  India}
\email{vravi68@gmail.com, ravic@nitt.edu}
\dedicatory{Dedicated to Prof.\@ Ajay Kumar on the occasion of his retirement}

\begin{abstract}
Sufficient conditions are determined on the parameters such that the  generalized  and normalized Bessel function of the first kind and other related functions belong to subclasses of starlike and convex functions defined in the unit disk associated with the exponential mapping. Several differential subordination implications are derived for analytic functions involving Bessel function and the operator introduced by Baricz \emph{et al.} [Differential subordinations involving generalized Bessel functions, Bull. Malays. Math. Sci. Soc. {\bf 38} (2015), no.~3, 1255--1280]. These results are obtained by constructing suitable class of admissible functions. Examples involving trigonometric and hyperbolic functions are provided to illustrate the obtained results.
\end{abstract}

\keywords{Bessel function, modified Bessel function, starlike functions, convex functions, differential subordination, exponential function}
\subjclass[2010]{33C10, 33B10, 30C45, 30C80}

\maketitle
\section{Introduction}
Due to diverse applications of Bessel function in wave propagation and static potential, it is widely studied in geometric function theory. Consider the generalized Bessel differential equation
\begin{equation}\label{e1}
z^2\omega''(z)+bz\omega'(z)+(cz^2-\nu^2+\nu(1-b))\omega(z)=0 \qquad(z,\nu,b,c\in\mathbb{C}).
\end{equation}The particular solution of \eqref{e1} is known as the generalized Bessel function $\omega_{\nu,b,c}$ of the first kind  of order $\nu$ having the infinite series representation
\begin{equation}\label{e6}
\omega_{\nu,b,c}(z)=\sum_{n\geq 0} \frac{(-c)^n}{n! \Gamma (\nu+n+(b+1)/2)}\left(\frac{z}{2}\right)^{2n+\nu}\qquad(z,\nu,b,c\in\mathbb{C})
\end{equation}
where $\Gamma$ denotes the Euler gamma function. Set $J_\nu:=\omega_{\nu,1,1}$ and $I_{\nu}:=\omega_{\nu,1,-1}$. Then $J_\nu$ and $I_{\nu}$ are the Bessel function of the first kind of order $\nu$ and the modified Bessel function of the first kind of order $\nu$ respectively. In fact, they are the solutions of the Bessel's equation $z^2\omega''(z)+z\omega'(z)+(z^2-\nu^2)\omega(z)=0$ and modified Bessel equation $z^2\omega''(z)+z\omega'(z)-(z^2+\nu^2)\omega(z)=0$ having the form
\begin{equation}\label{eqj}
{J}_{\nu}(z)=\sum_{n\geq 0} \frac{(-1)^n}{n! \Gamma (\nu+n+1)}\left(\frac{z}{2}\right)^{2n+\nu}\quad(z,\nu\in\mathbb{C})
\end{equation}
and
\begin{equation}\label{eqi}
I_{\nu}(z)=\sum_{n\geq 0} \frac{1}{n! \Gamma (\nu+n+1)}\left(\frac{z}{2}\right)^{2n+\nu}\qquad(z,\nu\in\mathbb{C})
\end{equation}
respectively. Similarly, the functions  $j_{\nu}:= \sqrt{\pi}\omega_{\nu,2,1}/2$ and $i_{\nu}:= \sqrt{\pi}\omega_{\nu,2,-1}/2$ denote the spherical Bessel function of the first kind of order $\nu$ and the modified spherical Bessel function of the first kind of order $\nu$  having the form
\begin{equation}\label{kv}
{j}_{\nu}(z)=\sqrt{\dfrac{\pi}{2z}}J_{\nu+1/2}(z)=\dfrac{\sqrt{\pi}}{2}\sum_{n\geq 0} \frac{(-1)^n}{n! \Gamma (\nu+n+3/2)}\left(\frac{z}{2}\right)^{2n+\nu}\qquad(z,\nu\in\mathbb{C})
\end{equation}
and
\begin{equation}\label{yv}
{i}_{\nu}(z)=\sqrt{\dfrac{\pi}{2z}}I_{\nu+1/2}(z)=\dfrac{\sqrt{\pi}}{2}\sum_{n\geq 0} \frac{1}{n! \Gamma (\nu+n+3/2)}\left(\frac{z}{2}\right)^{2n+\nu}\qquad(z,\nu\in\mathbb{C})
\end{equation}
respectively. Therefore the study of the geometric properties of $\omega_{\nu,b,c}$  such as univalence, starlikeness and convexity in the unit disk gives a unified treatment to the study of Bessel, modified Bessel, spherical Bessel and modified spherical Bessel functions. For more information on Bessel functions, see \cite{MR2656410,MR2743533,MR2429033,MR3352679,MR0111846,MR3252850,MR123456,MR3568675,MR2486953,MR2826152,MR3743004} and references therein.

Let $\mathcal{A}$ be the class of all analytic functions $f$ in $\mathbb{D}:=\{z\in\mathbb{C}\colon|z|<1 \}$ normalized by the condition $f(0)=0=f'(0)-1$ and $\mathcal{S}$ be its subclass consisting of univalent functions. The notion of subordination is quite useful in describing the containment relationship between the image domains of two analytic functions. For any two analytic functions $f$ and $g$ defined in $\mathbb{D}$, we say that $f$ is subordinate to $g$, written as $f\prec g$, if there exists a Schwarz function $s$ with $s(0)=0$ and $|s(z)|<1$ that satisfies $f(z)=g(s(z))$ for all $z\in\mathbb{D}$. It is easy to see that if $f\prec g$, then $f(0)=g(0)$ and $f(\mathbb{D})\subseteq g(\mathbb{D})$. In particular, if $g$ is univalent in $\mathbb{D}$, then $f\prec g$ if and only if $f(0)=g(0)$ and $f(\mathbb{D})\subseteq g(\mathbb{D})$.  Consider the family $ \mathcal{P}_e$ consisting of  functions $p$ that are analytic in $\mathbb{D}$ with  $p(0)=1$  and $p(z)\prec e^z$. Since $\re(e^z)>0$ for all $z\in\mathbb{D}$, a function $p\in\mathcal{P}_e$ also satisfies $\re(p(z))>0$ in $\mathbb{D}$ and therefore $p$ is also a Carath\'eodory function.

Since the normalization plays a vital role in studying the properties of univalent functions, therefore, it is imperative to normalize the generalized Bessel function $\omega_{\nu,b,c}$ given by \eqref{e1}. By means of the transformation \begin{equation}\label{phi}\varphi_{\nu,b,c}(z)= 2^\nu \Gamma\left(\nu+\frac{b+1}{2}\right)z^{-\nu/2} \omega_{\nu,b,c}(\sqrt{z})\end{equation} and using the Pochhammer symbol  $(x)_\mu$ given by \begin{equation*} (x)_\mu=\frac{\Gamma(x+\mu)}{\Gamma (x)}= \begin{cases}
1, & \mu=0\\
x(x+1)\cdots (x+n-1), &\mu=n\in\mathbb{N}
\end{cases}\end{equation*} 	  the function $\varphi_{\nu,b,c}$ has the infinite series representation\begin{equation}\label{e2}
\varphi_{\nu,b,c}(z)=  \sum_{n\geq 0} \frac{(-c/4)^n}{(\kappa)_n}\frac{z^n}{n!}
\end{equation}
where $\nu$, $b$, $c \in\mathbb{C}$ such that $\kappa=\nu+(b+1)/2\notin\{0,-1,-2,\ldots\}$ and is called the generalized and normalized Bessel function of the first kind of order $\nu$.

Note that $z\varphi_{\nu,b,c}\in\mathcal{A}$. If we set $b_n:=(-c/4)^n/(n!(\kappa)_n )$, then the function $\varphi_{\nu,b,c}\notin\mathcal{A}$ as it is not normalized but $(\varphi_{\nu,b,c}(z)-b_0)/b_1\in\mathcal{A}$. For brevity, we shall denote $\varphi_{\nu,b,c}$ by $\varphi_{\nu}$ and $z\varphi_{\nu,b,c}$ is denoted by  $\vartheta_{\nu,b,c}$ or simply  by $\vartheta_{\nu}$.   Also, note that the function $\varphi_{\nu}$ is entire as the radius of convergence of the  series \eqref{e2} is infinity and it also satisfies the second order differential equation \begin{equation}\label{e4}
4z^2 \varphi''_\nu(z)+4\kappa z \varphi'_\nu(z)+cz\varphi_\nu(z)=0.\end{equation}
Moreover, it satisfies the recurrence relation  \cite[Lemma 1.2, p.~14]{MR2656410}
\begin{equation}\label{e3}
4\kappa \varphi'_\nu(z) = -c\varphi_{\nu+1}(z).
\end{equation}

Determining the sufficient conditions on the parameters $\nu$, $b$, $c$ of generalized and normalized Bessel functions to belong to  well-known classes of univalent functions have a long history. Baricz and Ponnusamy \cite{MR2743533} determined the conditions on the parameters of normalized form of generalized Bessel functions to be convex and starlike in $\mathbb{D}$. Using the theory of differential subordination,  Bohra and Ravichandran \cite{MR3738359} determined the conditions so that $\varphi_{\nu,b,c}$ is strongly convex of order $1/2$. Recently, Madaan \emph{et al.} \cite{MR4048763} determined the lemniscate convexity and other properties of the generalized and normalized Bessel function using the theory of differential subordination. Kanas \textit{et al.\@}  \cite{MR123456} and Mondal and Dhuain \cite{MR3568675} independently  obtained the conditions on the parameters for which $\varphi_{\nu,b,c}$ is Janowski convex and $z\varphi'_{\nu,b,c}$ is Janowski starlike in the unit disk. In Section 2, we determine the conditions on the parameters $\kappa$ and $c$ so that the generalized and normalized Bessel function belongs to the classes $\ke$ and $\se$ introduced and studied by Mendiratta \emph{et al.} \cite{MR3394060}. Here $\ke$ and $\se$ are the subclasses of $\mathcal{S}$ consisting of functions $f$ for which the quantity $w=1+zf''/f'$ (and $w=zf'/f$ respectively) lies in the image domain $|\log w|<1$ under the exponential function $w=e^z$. In terms of subordination, $f\in \se$ if and only if $zf'/f \in \mathcal{P}_e$ and $f\in \ke$ if and only if $1+zf''/f' \in \mathcal{P}_e$. Interesting examples involving trigonometric and hyperbolic functions are provided to illustrate our results and its connection with Libera operator is also established.

Using the technique of convolution to define a linear operator involving generalized hypergeometric functions by Dziok and Srivastava \cite{MR1686354}, recently Baricz \textit{et al.\@}  \cite{MR3352679}  introduced the $B_\kappa^c$-operator  defined by
\begin{equation*}
B_\kappa^c f(z):=(\vartheta_{\nu}\ast f)(z)=z+\sum_{n=1}^{\infty} \frac{(-c/4)^na_{n+1}}{(\kappa)_n}\frac{z^{n+1}}{n!}\qquad(z\in\mathbb{D})
\end{equation*} where $f(z)=z+\sum_{n=1}^{\infty}a_{n+1}z^{n+1}\in\mathcal{A}$. Note that $B_\kappa^c$-operator satisfies the recurrence relation
\begin{equation}\label{b1}
z(B_{\kappa+1}^cf(z))'=\kappa B_\kappa^c f(z)-(\kappa-1)B_{\kappa+1}^cf(z)\qquad(z\in\mathbb{D}).
\end{equation}
Applying the methodology of admissible functions given by Miller and Mocanu \cite{MR1760285}, Baricz \textit{et al.\@} \cite{MR3352679} proved various differential subordination results involving the $B_\kappa^c$-operator. Motivated by their work, we derive a class of admissible functions involving the $B_\kappa^c$-operator in Section \ref{bkc}. Sufficient conditions are also determined so that the function $B_\kappa^cf$ belongs to the classes $\ke$ and $\se$.

We will make use of the theory of differential subordination which was introduced by Miller and Mocanu \cite{MR1760285} in proving the results of Sections \ref{exp} and \ref{bkc}. Let  $\mathcal{Q}$ denote the set of functions $q$ that are analytic and univalent  on $\overline{\mathbb{D}}\setminus \textbf{E}(q)$ where $\textbf{E}(q)=\{\zeta \in \partial \mathbb{D} \colon \lim_{z\to\zeta} q(z)=\infty \}$ and $q'(\zeta)\not=0$ for $\zeta\in\partial\mathbb{D}\setminus \textbf{E}(q)$. Further, we denote the subclass of $\mathcal{Q}$ for which $q(0)=a$ by $\mathcal{Q}_a$. The following definition of admissible functions and foundation result of differential subordination theory will be needed in our study.
\begin{definition}\cite[Definition 2.3(a), p.~ 27]{MR1760285}\label{def2}
	Let $\Omega$ be any subset in $\mathbb{C}$ and $q\in \mathcal{Q}$. Define $\Psi(\Omega,q)$ to be a \emph{class of admissible functions} $\psi\colon \mathbb{C}^3\times \mathbb{D} \to\mathbb{C}$ that satisfies the admissibility condition $\psi(r,s,t;z) \notin \Omega$ whenever $r=q(\zeta)$, $s= m\zeta q'(\zeta)$ and $\re(1+ t/s) \geq m \re( 1+\zeta q''(\zeta)/q'(\zeta))$ where $z\in\mathbb{D}$, $\zeta \in \partial \mathbb{D}\setminus \textbf{E}(q)$ and $m \geq 1$ is a positive integer.
\end{definition}
\begin{theorem}\cite[Theorem 2.3(b), p.~28]{MR1760285}\label{thm1}
	Let $\psi\in\Psi(\Omega,q)$ with $q\in\mathcal{Q}_a$. If an analytic function  $p(z)=a+a_nz^n+a_{n+1}z^{n+1}+\cdots$ satisfies $\psi(p(z),zp'(z),z^2p''(z);$ $z)\in\Omega$, then $p(z)\prec q(z)$.
\end{theorem}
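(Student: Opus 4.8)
The plan is to argue by contradiction and reduce the statement to the fundamental boundary lemma of the theory, exploiting the univalence of $q$ to pass from $p$ to the composite $q^{-1}\circ p$, on which a Jack-type maximum-modulus argument applies.

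First I would suppose that $p\not\prec q$. Since $q\in\mathcal{Q}_a$ is univalent on $\mathbb{D}$ and $p(0)=a=q(0)$, non-subordination forces $p(\mathbb{D})\not\subseteq q(\mathbb{D})$. Because $p$ is continuous with $p(0)=a\in q(\mathbb{D})$, the image $p(\{|z|<r\})$ lies in $q(\mathbb{D})$ for all small $r$; letting $r_0=\sup\{r\in(0,1)\colon p(\{|z|<r\})\subseteq q(\mathbb{D})\}$, there is a point $z_0$ with $|z_0|=r_0$ at which $p(z_0)\in\partial q(\mathbb{D})$. Since $q$ is univalent on $\overline{\mathbb{D}}\setminus\mathbf{E}(q)$ with $q'\neq0$ on $\partial\mathbb{D}\setminus\mathbf{E}(q)$, this boundary value can be written $p(z_0)=q(\zeta_0)$ for a unique $\zeta_0\in\partial\mathbb{D}\setminus\mathbf{E}(q)$.

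Next I would linearize. On $\{|z|<r_0\}$ the map $w:=q^{-1}\circ p$ is well defined and analytic (as $q^{-1}$ is analytic on $q(\mathbb{D})$), with $w(0)=0$, $|w|<1$, and $|w(z)|\to1$ as $z\to z_0$; since $q^{-1}$ is analytic near $q(\zeta_0)$ (here $q'(\zeta_0)\neq0$ is essential), $w$ extends analytically past $z_0$ with $w(z_0)=\zeta_0$ and $|w(z_0)|=1=\max_{|z|\le r_0}|w(z)|$. Writing $p=q\circ w$ and comparing lowest-order terms shows $w(z)=c_nz^n+\cdots$. The heart of the matter is the Jack/Miller--Mocanu lemma applied to $w$: at the interior maximum-modulus point $z_0$ one obtains a real $m\ge n\ge1$ with $z_0w'(z_0)=m\,w(z_0)$ and $\re\bigl(1+z_0w''(z_0)/w'(z_0)\bigr)\ge m$. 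The first relation follows from the vanishing of the tangential derivative and the nonnegativity of the radial derivative of $\log|w|$ at $z_0$ (with an auxiliary Schwarz-type estimate for $w(z)/z^n$ to upgrade $m\ge0$ to $m\ge n$); the second comes from computing $\tfrac{d^2}{d\theta^2}\log|w(r_0e^{i\theta})|\le0$ at the maximizing angle.

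Finally I would transfer these relations back to $p$ and $q$ by the chain rule. Differentiating $p=q\circ w$ and using $z_0w'(z_0)=m\zeta_0$ gives $z_0p'(z_0)=m\zeta_0q'(\zeta_0)$, and a second differentiation together with $\re\bigl(1+z_0w''(z_0)/w'(z_0)\bigr)\ge m$ yields $\re\bigl(1+z_0p''(z_0)/p'(z_0)\bigr)\ge m\,\re\bigl(1+\zeta_0q''(\zeta_0)/q'(\zeta_0)\bigr)$. Thus, with $r=p(z_0)$, $s=z_0p'(z_0)$ and $t=z_0^2p''(z_0)$, the triple $(r,s,t)$ meets exactly the hypotheses of the admissibility condition defining $\Psi(\Omega,q)$, so $\psi(r,s,t;z_0)\notin\Omega$. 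This contradicts the assumption that $\psi(p(z),zp'(z),z^2p''(z);z)\in\Omega$ for all $z\in\mathbb{D}$, and hence $p\prec q$. The main obstacle is the fundamental lemma of the third paragraph---establishing the first-touching point $z_0$ and proving the Jack-type derivative inequalities---while the passage from $w$ back to $p$ and the final contradiction are routine once the chain-rule bookkeeping is carried out carefully.
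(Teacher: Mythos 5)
Your argument is correct and is essentially the proof from the cited source: the paper states Theorem \ref{thm1} without proof, quoting Miller and Mocanu, whose proof runs exactly as yours does --- assume $p\not\prec q$, locate the first boundary-contact point $z_0$ with $p(z_0)=q(\zeta_0)$, apply the Jack-type contact lemma (their Lemma 2.2c/2.2d) to $w=q^{-1}\circ p$, and transfer via the chain rule to get the triple $\bigl(p(z_0),z_0p'(z_0),z_0^2p''(z_0)\bigr)$ meeting the admissibility hypotheses, so that $\psi\bigl(p(z_0),z_0p'(z_0),z_0^2p''(z_0);z_0\bigr)\notin\Omega$ contradicts the assumption. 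The one notational caveat: the $m$ your Jack-lemma step produces is a \emph{real} number $m\ge n\ge 1$, which is what Miller and Mocanu's original definition of $\Psi(\Omega,q)$ requires; the restatement in Definition \ref{def2} above loosely calls $m$ a positive integer, and your proof (correctly) needs the admissibility condition for all real $m\ge 1$.
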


\section{Exponential Convexity and Starlikeness of the Generalized and Normalized Bessel Function}\label{exp}

Using the theory of differential subordination, Naz \textit{et al.\@} \cite{MR3962536}  investigated the class of admissible functions associated with the exponential function $e^z$ and proved the following lemma.
\begin{lemma}\cite{MR3962536}\label{lemA}
	Let $\Omega$ be a subset of $\mathbb{C}$ and the function $\psi \colon \mathbb{C}^3\times \mathbb{D}\to\mathbb{C}$ satisfies the admissibility condition $ \psi(r,s,t;z) \notin \Omega$ whenever
	\begin{equation*}
	r = e^{e^{i\theta}},\quad  s = m e^{i\theta} r \quad
	\text{and} \quad \re (1+t/s) \geq m (1+\cos \theta)
	\end{equation*}
	where $z\in\mathbb{D}$, $\theta \in [0,2\pi)$ and $m \geq 1$. If $p$ is an analytic function in $\mathbb{D}$ with $p(0)=1$ and $\psi(p(z), zp'(z), z^2 p''(z); z) \in \Omega$ for $z\in \mathbb{D}$, then  the function $p \in \mathcal{P}_e$.
\end{lemma}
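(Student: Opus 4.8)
The plan is to obtain this lemma as a direct specialization of Theorem \ref{thm1} to the choice $q(z)=e^z$. The essential observation is that the explicit admissibility condition stated here is exactly what the abstract admissibility condition of Definition \ref{def2} becomes once one evaluates $q(\zeta)$, $\zeta q'(\zeta)$, and $\zeta q''(\zeta)/q'(\zeta)$ along the boundary $\partial\mathbb{D}$. So the work reduces to (i) checking that $e^z$ is a legitimate dominant, and (ii) translating the boundary data.

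First I would verify that $q(z)=e^z$ satisfies $q\in\mathcal{Q}_1$. Since $e^z$ is entire, it has no poles on $\overline{\mathbb{D}}$, so the exceptional set $\textbf{E}(q)$ is empty, and $q'(\zeta)=e^{\zeta}\neq 0$ for every $\zeta\in\partial\mathbb{D}$. For univalence on $\overline{\mathbb{D}}$, I would note that $e^{z_1}=e^{z_2}$ forces $z_1-z_2\in 2\pi i\mathbb{Z}$, which is impossible for distinct $z_1,z_2\in\overline{\mathbb{D}}$ because $|z_1-z_2|\leq 2<2\pi$. Together with $q(0)=1$, this establishes $q\in\mathcal{Q}_1$.

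Next I would translate the admissibility condition of Definition \ref{def2}. Writing a boundary point as $\zeta=e^{i\theta}$ with $\theta\in[0,2\pi)$, I compute $r=q(\zeta)=e^{e^{i\theta}}$ and, using $q'=q''=e^z$, obtain $s=m\zeta q'(\zeta)=me^{i\theta}e^{e^{i\theta}}=me^{i\theta}r$ as well as $\zeta q''(\zeta)/q'(\zeta)=\zeta=e^{i\theta}$, whence $m\,\re\!\left(1+\zeta q''(\zeta)/q'(\zeta)\right)=m(1+\cos\theta)$. These are precisely the relations appearing in the statement, so the hypothesis of the lemma says exactly that $\psi\in\Psi(\Omega,e^z)$ in the sense of Definition \ref{def2}. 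One should remark here that requiring the condition for all real $m\geq 1$ is a stronger demand than requiring it only for the positive integer values $m\geq 1$ needed in Definition \ref{def2}, so the hypothesis is certainly sufficient.

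Finally, since $p$ is analytic with $p(0)=1=q(0)$ and $\psi(p(z),zp'(z),z^2p''(z);z)\in\Omega$, Theorem \ref{thm1} applies with $a=1$ and yields $p(z)\prec q(z)=e^z$. By the definition of the family $\mathcal{P}_e$, this is exactly the assertion $p\in\mathcal{P}_e$. I do not anticipate any genuine obstacle: the only steps needing care are the univalence of $e^z$ on the closed disk and the bookkeeping that reduces the abstract admissibility data to the stated boundary relations, after which the conclusion is a verbatim invocation of Theorem \ref{thm1}.
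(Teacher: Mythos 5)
Your proposal is correct and follows precisely the intended route: the paper itself does not reprove Lemma \ref{lemA} but cites it from Naz \emph{et al.} \cite{MR3962536}, where it is obtained exactly as you do, by verifying $q(z)=e^z\in\mathcal{Q}_1$ (univalence on $\overline{\mathbb{D}}$ via $|z_1-z_2|\leq 2<2\pi$, empty exceptional set, nonvanishing $q'$) and specializing Definition \ref{def2} and Theorem \ref{thm1}, using $\zeta q''(\zeta)/q'(\zeta)=\zeta=e^{i\theta}$ to produce the stated boundary relations. Your remark that the hypothesis quantifies over all real $m\geq 1$, which is stronger than the integer values required by Definition \ref{def2}, is a correct and careful observation.
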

Observe that the admissibility condition $\psi(r,s,t;z)\not\in\Omega$ is true for all $r=e^{e^{i\theta}}$, $s=me^{i\theta}e^{e^{i\theta}}$ and $t$ such that $\re(1+t/s)\geq 0$, that is,  $\re ((t+s)e^{-i\theta}e^{-e^{i\theta}}) \geq 0$ for all $\theta\in[0,2\pi)$ and $m\geq1$. We ought to employ this form of admissibility condition to demonstrate our results. Furthermore, for the case $\psi \colon  \mathbb{C}^2\times \mathbb{D}\to\mathbb{C}$, the admissibility condition reduces to
\[\psi(e^{e^{i\theta}}, m e^{i\theta} e^{e^{i\theta}};z) \notin \Omega \]
where $z\in\mathbb{D}$,  $\theta \in [0,2\pi)$ and $m \geq 1$.

We first determine the condition on the parameters $\kappa$ and $c$ so that the generalized and normalized Bessel function belongs to the class $\mathcal{P}_e$ using the recurrence relation \eqref{e3}.
\begin{theorem}\label{thmB}
	If the parameters $\kappa$, $c \in\mathbb{C}$  are constrained such that $\re(\kappa) \geq (|c|/4)+1$, then the function $\varphi_\nu\in \mathcal{P}_e$.
\end{theorem}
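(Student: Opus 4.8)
The plan is to invoke Lemma \ref{lemA} with the admissible function read off directly from the second order differential equation \eqref{e4}. Set $p=\varphi_\nu$; since $\varphi_\nu$ is entire and $\varphi_\nu(0)=b_0=1$, the function $p$ is analytic in $\mathbb{D}$ with $p(0)=1$ (note that $\re(\kappa)\geq(|c|/4)+1\geq 1>0$ guarantees $\kappa\notin\{0,-1,-2,\ldots\}$, so $\varphi_\nu$ is well defined). Writing $r=p(z)$, $s=zp'(z)$ and $t=z^2p''(z)$, equation \eqref{e4} reads $4z^2\varphi''_\nu+4\kappa z\varphi'_\nu+cz\varphi_\nu=0$, which suggests the choice $\psi(r,s,t;z)=4t+4\kappa s+czr$ together with $\Omega=\{0\}$. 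With this choice $\psi(p(z),zp'(z),z^2p''(z);z)=0\in\Omega$ for every $z\in\mathbb{D}$, so the hypothesis of Lemma \ref{lemA} on $p$ holds automatically, and it remains only to verify that $\psi$ satisfies the admissibility condition.

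For the admissibility I would use the relaxed form highlighted in the remark following Lemma \ref{lemA}: it suffices to show $\psi(r,s,t;z)\neq 0$ whenever $r=e^{e^{i\theta}}$, $s=me^{i\theta}e^{e^{i\theta}}$ and $\re(1+t/s)\geq 0$, for all $z\in\mathbb{D}$, $\theta\in[0,2\pi)$ and $m\geq 1$. The key manipulation is to multiply $\psi$ by the nonzero factor $e^{-i\theta}e^{-e^{i\theta}}$. Using $s\,e^{-i\theta}e^{-e^{i\theta}}=m$ and $r\,e^{-i\theta}e^{-e^{i\theta}}=e^{-i\theta}$, and writing $u=t/s$, one obtains $\psi\,e^{-i\theta}e^{-e^{i\theta}}=4m(u+\kappa)+cze^{-i\theta}$. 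Taking real parts and using $\re(u)\geq -1$, $|z|<1$, $m\geq 1$, and finally the hypothesis $\re(\kappa)\geq(|c|/4)+1$, gives the chain $\re(\psi\,e^{-i\theta}e^{-e^{i\theta}})\geq -4m+4m\re(\kappa)+\re(cze^{-i\theta})>4m(\re(\kappa)-1)-|c|\geq m|c|-|c|\geq 0$. Hence $\re(\psi\,e^{-i\theta}e^{-e^{i\theta}})>0$, so $\psi\neq 0$ and the admissibility condition is met. Applying Lemma \ref{lemA} then yields $p=\varphi_\nu\in\mathcal{P}_e$.

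Two points deserve attention. First, the case $c=0$ should be treated separately: then only the $n=0$ term of \eqref{e2} survives, so $\varphi_\nu\equiv 1\prec e^z$ trivially and $\varphi_\nu\in\mathcal{P}_e$ directly; this also sidesteps the fact that the bound on $\re(cze^{-i\theta})$ degenerates when $c=0$. Second, for $c\neq 0$ the \emph{strictness} of the inequality, and hence the genuine exclusion of the value $\psi=0$, rests entirely on the open-disk bound $\re(cze^{-i\theta})\geq -|c|\,|z|>-|c|$ coming from $|z|<1$. This is the one place where the domain hypothesis is used essentially, and it is the main (though mild) obstacle; everything else reduces to the routine estimate above. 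Finally, the recurrence \eqref{e3} enters only indirectly, since \eqref{e4} is the differential equation it governs, so one could equally package the argument through \eqref{e3}.
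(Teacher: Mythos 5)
Your proof is correct, and it streamlines the paper's argument rather than reproducing it. The paper does not apply Lemma \ref{lemA} to $\varphi_\nu$ directly: it first proves the auxiliary claim that $(-4\kappa/c)\varphi'_\nu\in\mathcal{P}_e$ whenever $c\neq 0$ and $\re(\kappa)\geq|c|/4$, by taking $p=(-4\kappa/c)\varphi'_\nu$, the differentiated equation $4z^2p''(z)+4(\kappa+1)zp'(z)+czp(z)=0$, and the admissible function $\psi(r,s,t;z)=4t+4(\kappa+1)s+czr$; it then invokes the recurrence \eqref{e3} in the form $4(\kappa-1)\varphi'_{\nu-1}(z)=-c\varphi_\nu(z)$ to transfer the claim to $\varphi_\nu$, the index shift $\kappa\mapsto\kappa-1$ producing the hypothesis $\re(\kappa)\geq(|c|/4)+1$. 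You instead work with $p=\varphi_\nu$ itself and the admissible function $\psi(r,s,t;z)=4t+4\kappa s+czr$ read off from \eqref{e4}, so neither the recurrence nor the auxiliary derivative claim is needed. The two admissibility verifications are essentially the same computation (normalize by $e^{-i\theta}e^{-e^{i\theta}}$, take real parts, use the relaxed condition $\re(1+t/s)\geq 0$ together with $|z|<1$ and $m\geq 1$), and your bookkeeping is accurate: the separate treatment of $c=0$, the check that $p(0)=1$ and that $\kappa$ avoids the poles of the Pochhammer symbol, and the observation that strictness comes precisely from $\re(cze^{-i\theta})>-|c|$ on the open disk. What the paper's detour buys is the intermediate statement about $\varphi'_\nu$ under the weaker constraint $\re(\kappa)\geq|c|/4$; but since \eqref{e3} gives $(-4\kappa/c)\varphi'_\nu=\varphi_{\nu+1}$, that intermediate claim is the theorem itself with $\nu$ replaced by $\nu+1$, so your direct route loses nothing and is shorter.
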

\begin{proof}
	If $c=0$, then $\varphi_\nu\equiv 1$ which clearly belongs to $\mathcal{P}_e$. Suppose that $c\neq 0$. In order to prove our result, we first claim that the function $(-4\kappa/c)\varphi'_\nu\in\mathcal{P}_e$ if $\re(\kappa)\geq|c|/4$. Let us
	define a function $p\colon\mathbb{D}\to\mathbb{C}$ by \[p(z)=-\frac{4\kappa }{c}\varphi'_\nu(z). \] Then   $p$ is analytic in $\mathbb{D}$ and $p(0)=1$. Suppose that $z\not=0$. Since $\varphi_\nu$ satisfies the differential equation \eqref{e4}, we have \[4z \varphi''_\nu(z)+4\kappa \varphi'_\nu (z)+c \varphi_\nu (z)=0\qquad(z\in\mathbb{D}). \] Differentiation gives $4z\varphi'''_\nu(z)+4(\kappa+1)\varphi''_\nu(z)+c\varphi'_\nu(z)=0$ so that the  function $p$ satisfies
	\begin{equation}\label{e5}
	4z^2 p''(z)+4(\kappa+1) z p'(z)+czp(z)=0\qquad(z\in\mathbb{D}).\end{equation}
	Note that the above equation is also valid for $z=0$. Suppose $\re(\kappa )\geq |c|/4$ and define a function $\psi:\mathbb{C}^3\times \mathbb{D}\to \mathbb{C}$ by $\psi(r,s,t;z)=4t+4(\kappa+1)s+czr$ and let $\Omega:=\{0\}$, then \eqref{e5} can be written as \[\psi(p(z),zp'(z),z^2p''(z);z)\in\Omega\qquad(z\in\mathbb{D}). \]
	To prove the required result, we will make use of Lemma \ref{lemA} which will be applicable if we show that  $\psi(r,s,t;z)\notin\Omega$ whenever $r = e^{e^{i\theta}}$,   $s = m e^{i\theta} e^{e^{i\theta}}$ and $\re((s+t)e^{-i\theta}e^{-e^{i\theta}}) \geq 0 $ where $z\in\mathbb{D}$,  $\theta \in [0,2\pi)$ and $m \geq 1$. For any $z_1$, $z_2\in\mathbb{C}$, recall the  inequality $|z_1+z_2|\geq ||z_1|-|z_2||$  and note that
	\begin{align*}
	|\psi(r,s,t;z)| & = |4(t+s)+4\kappa me^{i\theta}e^{e^{i\theta}}+ cze^{e^{i\theta}}|\\
	&> 4e^{\cos\theta}\left(|(t+s)e^{-i\theta}e^{-e^{i\theta}}+\kappa m |-\frac{1}{4}|c|\right)\\
	& \geq 4 e^{\cos\theta}\left(\re\big((t+s)e^{-i\theta}e^{-e^{i\theta}}\big)+m \re(\kappa)  -\frac{1}{4}|c|\right)\geq0.
	\end{align*} Thus $|\psi(r,s,t;z)|\ne 0$ and using  Lemma \ref{lemA}, we conclude that $p(z)\prec e^z$ for all $z\in\mathbb{D}$, that is, if $c\ne0$ and $\re(\kappa)\geq|c|/4$, then the function $(-4\kappa/c)\varphi'_\nu\in\mathcal{P}_e$.
	Using the recurrence relation  \eqref{e3}, we have \[4(\kappa-1)\varphi'_{\nu-1}(z)=-c\varphi_\nu(z). \]
	Hence it follows that if $c\ne0$ and $\re(\kappa)\geq(|c|/4)+1$, then the function $\varphi_\nu\in\mathcal{P}_e$. This completes the proof of the theorem.
\end{proof}
It is interesting to note that generalized Bessel functions of first kind can be reduced to elementary trigonometric and hyperbolic functions. Therefore the above deduced result is significant and leads to various  interesting relations involving the generalized and normalized Bessel function and  trigonometric functions by selecting the suitable choices of the parameters involved. Let us illustrate Theorem \ref{thmB} by the following example.
\begin{example}
	Clearly, the following functions
	\begin{gather*}
	\varphi_{1}(z)=\varphi_{1,0,2}(z)=\frac{\sin(\sqrt{2z})}{\sqrt{2z}}\\
	\varphi_{2}(z)=\varphi_{2,0,6}(z)=\frac{1}{12}\left(\frac{\sqrt{6}\sin(\sqrt{6z})}{z^{3/2}}-\frac{6\cos(\sqrt{6z})}{z}\right)\\
	\varphi_{3}(z)=\varphi_{3,2,10}(z)=\frac{1}{40}\left(\frac{21(-3+2z)\cos(\sqrt{10z})}{z^{3}}+\frac{63(1-4z)\sin(\sqrt{10z})}{\sqrt{10}z^{7/2}}\right)
	\end{gather*}
	satisfy the hypothesis of Theorem \ref{thmB}. Therefore $\varphi_i(z)\prec e^z$ (for $i=1,2,3)$. These three subordinations are illustrated graphically in Figure \ref{fig:1}.
	
	\begin{figure}[h]
		\begin{center}
			\subfigure[$c=2,\kappa=3/2$]{\includegraphics[width=1.95in]{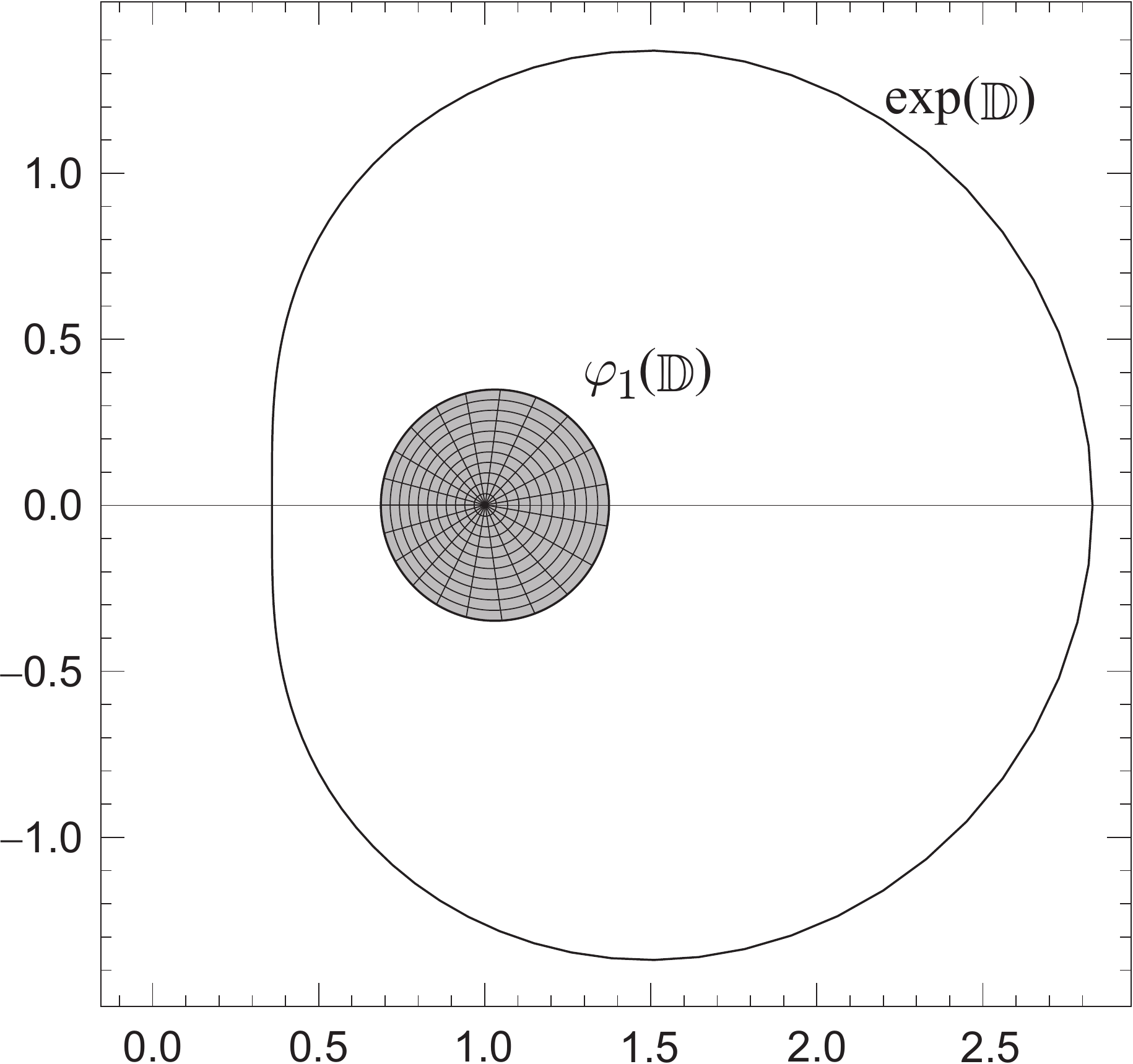}}\hspace{10pt}
			\subfigure[$c=6,\kappa=5/2$]{\includegraphics[width=1.95in]{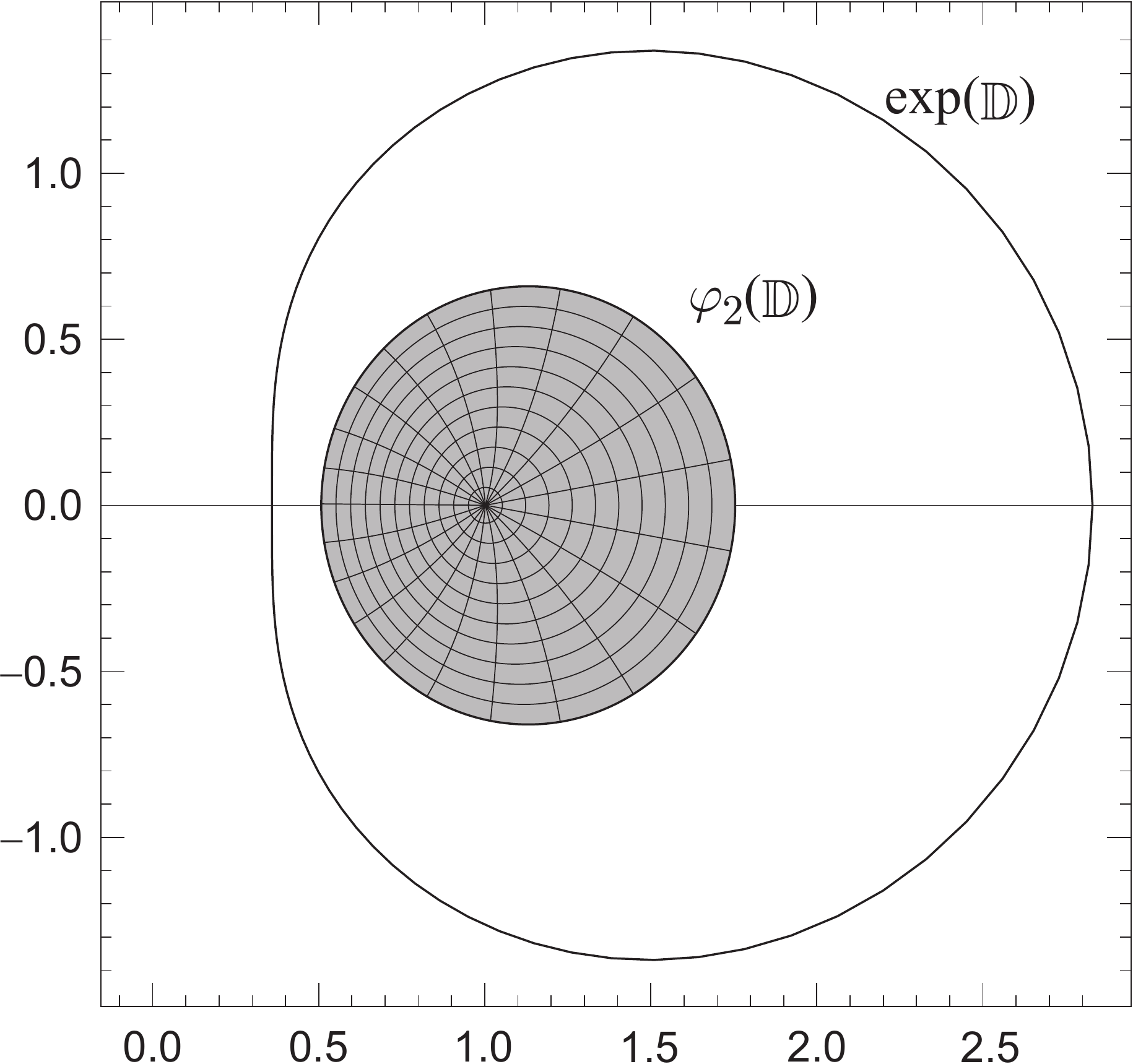}}\hspace{10pt}
			\subfigure[$c=10,\kappa=9/2$]{\includegraphics[width=1.95in]{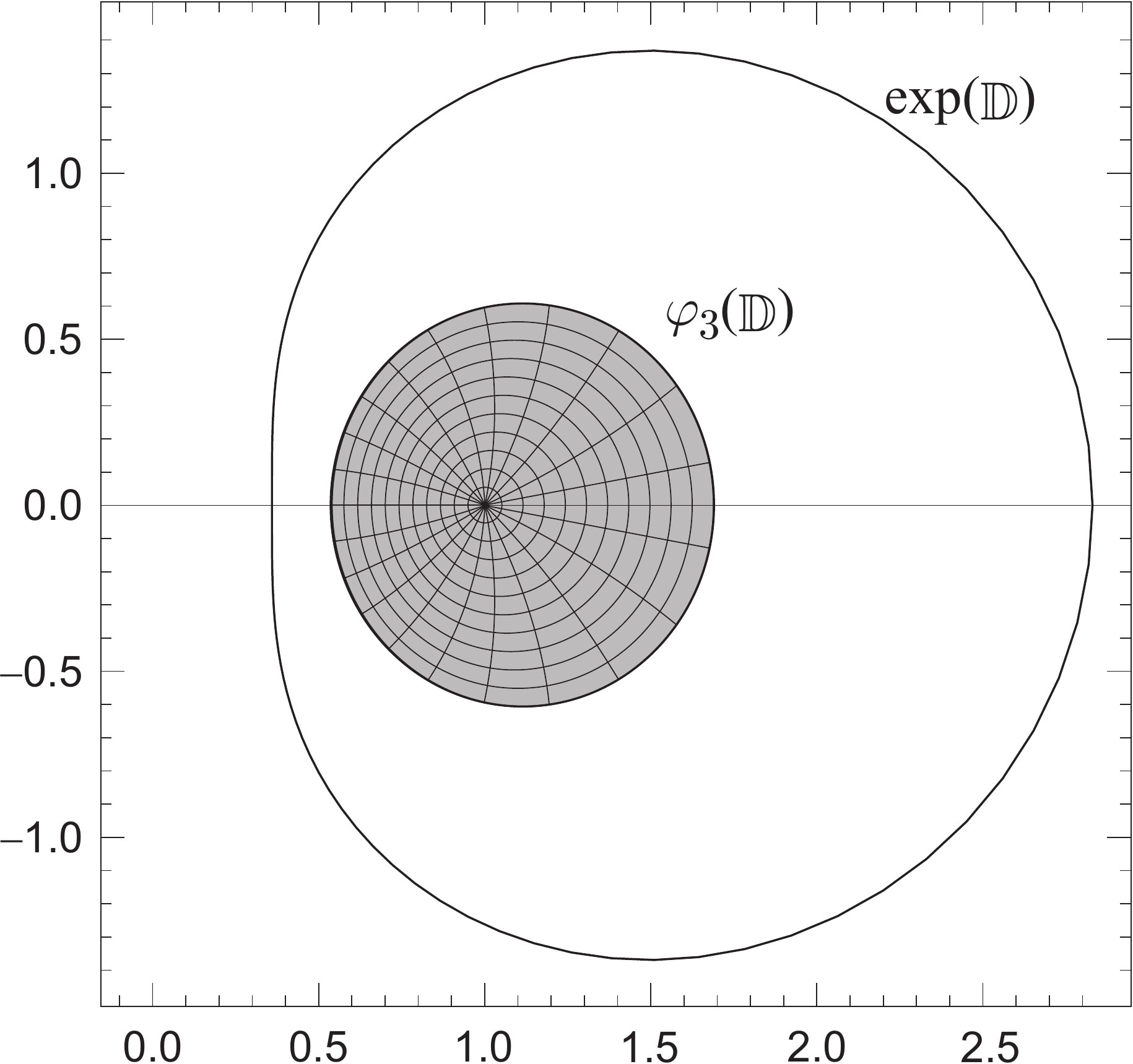}}
			\caption{Graph showing $\varphi_i(\mathbb{D})\subset \exp(\mathbb{D})$ ($i=1,2,3$).}\label{fig:1}
		\end{center}
	\end{figure}
	Similarly, the subordinations $\varphi_8(z)=\varphi_{8, 0,30}(z)\prec e^z$ and $\varphi_{31/2}(z)= \varphi_{31/2, 0,60}$ $(z)\prec e^z$ can be interpreted graphically as shown in Figure \ref{fig:2}.
	\begin{figure}[h]
		\begin{center}
			\subfigure[$c=30,\kappa=17/2$]{\includegraphics[width=1.95in]{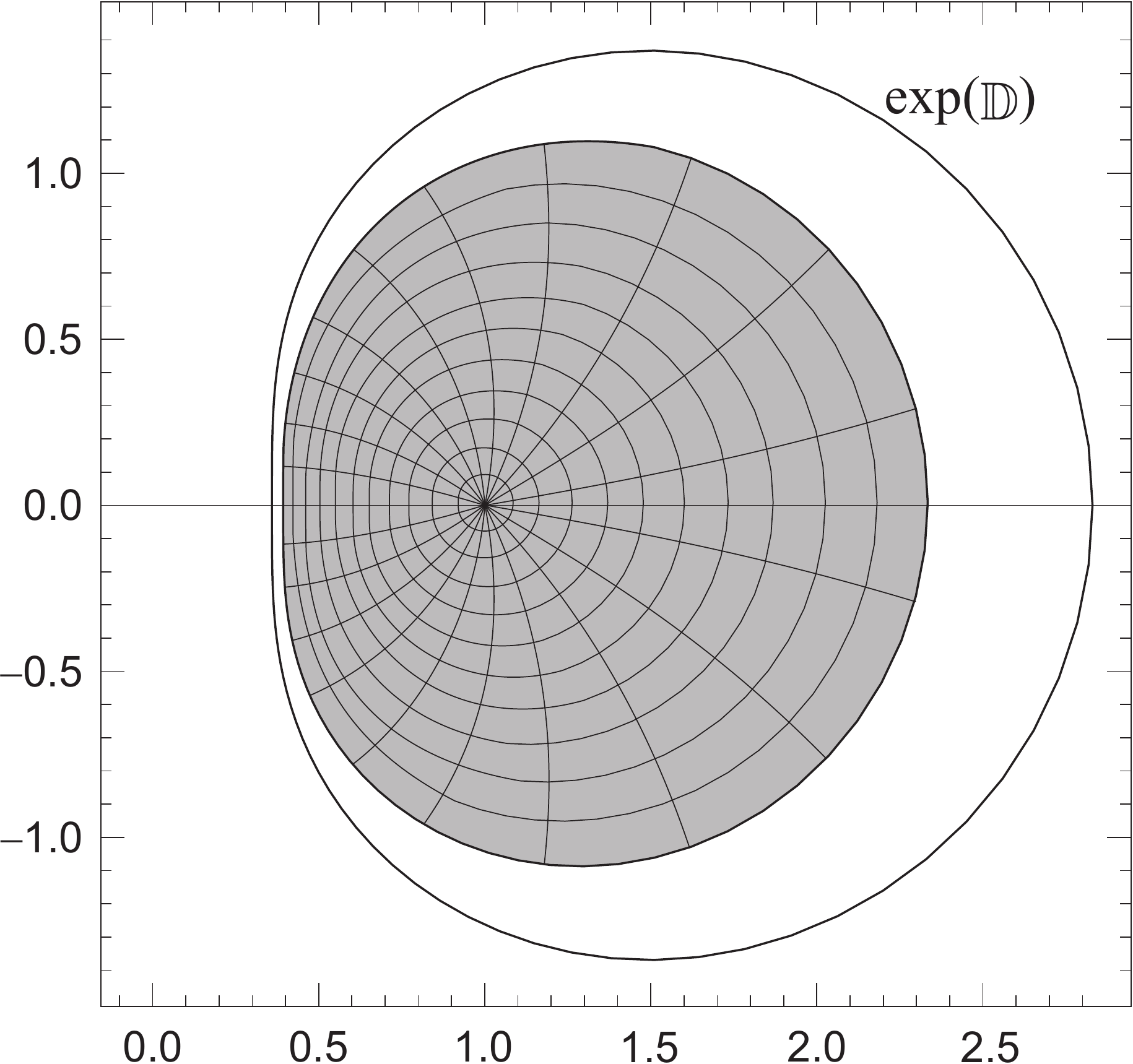}}\hspace{10pt}
			\subfigure[$c=60,\kappa=16$]{\includegraphics[width=1.95in]{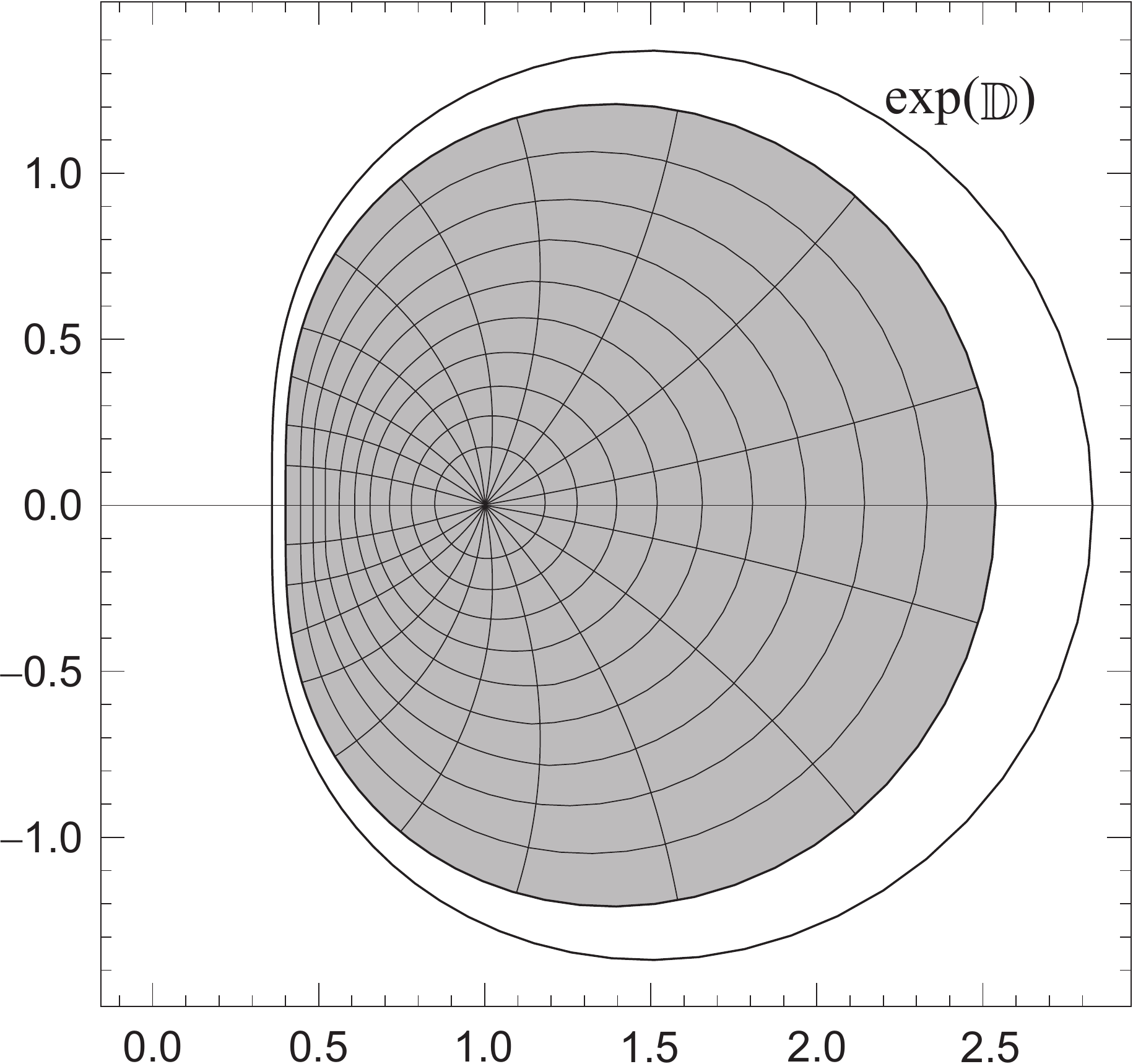}}\hspace{10pt}
			\caption{Graph showing $\varphi_i(\mathbb{D})\subset \exp(\mathbb{D})$ ($i=8,31/2$).}
			\label{fig:2}
		\end{center}
	\end{figure}
\end{example}

The next result deals with a sufficient condition on the parameters $\kappa$ and $c$ so that the generalized and normalized Bessel function belongs to the class $\ke$.

\begin{theorem}\label{thmA}
	Let the parameters $\kappa$, $c \in\mathbb{C}$  be constrained such that $c\neq 0$, $\re(\kappa) \geq|c|/4$  and \begin{equation}\label{e12}
	| \kappa-2| + \frac{1}{4(e-1)}|c|\leq\frac{e^2+e-1}{e^2(e-1)}
	\end{equation} then the function  $-4\kappa(\varphi_\nu-1)/c\in\ke$.
\end{theorem}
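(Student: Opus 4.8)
The plan is to prove membership in $\ke$ by showing that $p(z):=1+zf''(z)/f'(z)\prec e^z$ for $f:=-4\kappa(\varphi_\nu-1)/c$; since $p\prec e^z$ forces $\re(1+zf''/f')>0$, this simultaneously delivers convexity (hence univalence) and the exponential subordination, i.e. $f\in\ke$. First I would record the bookkeeping. One checks $f\in\mathcal{A}$: clearly $f(0)=0$, and by the recurrence \eqref{e3} one has $f'=(-4\kappa/c)\varphi'_\nu=\varphi_{\nu+1}$, so $f'(0)=1$. The intermediate claim established inside the proof of Theorem \ref{thmB} is precisely that $(-4\kappa/c)\varphi'_\nu\in\mathcal{P}_e$ whenever $\re(\kappa)\geq|c|/4$; hence under our hypothesis $f'=\varphi_{\nu+1}\in\mathcal{P}_e$, so $f'$ is zero-free in $\mathbb{D}$ and $p$ is analytic in $\mathbb{D}$ with $p(0)=1$.

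Next I would convert $p\prec e^z$ into a subordination amenable to Lemma \ref{lemA}. Writing $g:=f'=\varphi_{\nu+1}$, the function $g$ satisfies \eqref{e5}, that is $zg''+(\kappa+1)g'+(c/4)g=0$. Computing $zp'$ from $p-1=zg'/g$ and using \eqref{e5} to eliminate $g''$ produces the first-order relation
\begin{equation*}
zp'(z)+(p(z)-1)^2+\kappa(p(z)-1)+\frac{c}{4}z=0\qquad(z\in\mathbb{D}).
\end{equation*}
Thus, with $\Omega=\{0\}$ and $\psi(r,s;z)=s+(r-1)^2+\kappa(r-1)+(c/4)z$, one has $\psi(p(z),zp'(z);z)\in\Omega$, and by Lemma \ref{lemA} it remains only to verify the admissibility condition $\psi(e^{e^{i\theta}},me^{i\theta}e^{e^{i\theta}};z)\neq0$ for all $\theta\in[0,2\pi)$, $m\geq1$ and $z\in\mathbb{D}$.

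To verify this I would set $r=e^{e^{i\theta}}$, so that $|r|=e^{\cos\theta}$ and, from $e^{e^{i\theta}}-1=\sum_{n\geq1}e^{in\theta}/n!$, $|r-1|\leq e-1$. The algebraic identity $(r-1)^2+\kappa(r-1)=(r^2-1)+(\kappa-2)(r-1)$ gives the splitting
\begin{equation*}
\psi=\big(me^{i\theta}r+r^2-1\big)+(\kappa-2)(r-1)+\frac{c}{4}z.
\end{equation*}
Bounding the last two summands by $(e-1)|\kappa-2|$ and $|c|/4$ (using $|z|<1$ and $c\neq0$), and observing that \eqref{e12} is exactly equivalent to $(e-1)|\kappa-2|+|c|/4\leq(e^2+e-1)/e^2$, the whole argument collapses to the single inequality
\begin{equation*}
\bigl|me^{i\theta}e^{e^{i\theta}}+e^{2e^{i\theta}}-1\bigr|\geq\frac{e^2+e-1}{e^2}\qquad(\theta\in[0,2\pi),\ m\geq1).
\end{equation*}

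This last estimate is where I expect the real difficulty to lie. Unlike the perturbation terms, the quantity $M:=me^{i\theta}r+r^2-1$ resists a crude triangle inequality, because at the extremal configuration $\theta=\pi$, $m=1$ all three summands are real and reinforce one another, yielding $|M|=1+1/e-1/e^2=(e^2+e-1)/e^2$ with equality. I would treat $|M|^2$ as an upward parabola in $m$ and show its vertex lies at $m\leq1$ (equivalently $\re(\overline{M_0}\,e^{i\theta}r)\geq0$, where $M_0$ is the value at $m=1$), reducing the problem to $m=1$; a one-variable study of $\theta\mapsto|e^{i\theta}e^{e^{i\theta}}+e^{2e^{i\theta}}-1|$ on $[0,2\pi)$ then pins the minimum at $\theta=\pi$. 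With this in hand the chain $|\psi|>|M|-(e-1)|\kappa-2|-|c|/4\geq0$ shows $\psi\neq0$, so Lemma \ref{lemA} gives $p\prec e^z$ and therefore $-4\kappa(\varphi_\nu-1)/c\in\ke$.
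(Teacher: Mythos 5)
Your proof is correct and essentially the paper's own: you define the same function $p(z)=1+z\varphi''_\nu(z)/\varphi'_\nu(z)$, arrive at the same first-order relation (the paper's $4zp'+4p^2+4(\kappa-2)(p-1)+cz-4=0$ is your identity after the rearrangement $(r-1)^2+\kappa(r-1)=(r^2-1)+(\kappa-2)(r-1)$), and apply Lemma \ref{lemA} with the identical splitting, the bound $|r-1|\le e-1$, and the extremal value $(e^2+e-1)/e^2$ of $\bigl|me^{i\theta}e^{e^{i\theta}}+e^{2e^{i\theta}}-1\bigr|$ attained at $\theta=\pi$, $m=1$. Your minor departures are harmless or slightly better: you justify $\varphi'_\nu\neq 0$ via the $\mathcal{P}_e$ claim inside the proof of Theorem \ref{thmB} (the paper instead cites the univalence of $\varphi_\nu$ from Baricz's book), you bound $|e^{e^{i\theta}}-1|$ by the power series rather than maximizing $g_2$, and your two-step minimization of $|s+r^2-1|$ (reduce in $m$ via the parabola's vertex, then study $\theta$ at $m=1$) is, if anything, more careful than the paper's bare appeal to the second derivative test for $g_1$.
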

\begin{proof}
	Since $\re(\kappa) \geq|c|/4$ and $c\neq 0$, $\varphi_\nu$ is univalent in $\mathbb{D}$ by means of  \cite[Theorem 2.3, p.~29]{MR2656410}  and therefore $\varphi'_\nu(z)\not=0$ for all $z\in\mathbb{D}$. Define a function $p\colon\mathbb{D}\to\mathbb{C}$ by \[p(z)=1+\frac{z(-4\kappa(\varphi_\nu-1)/c)''}{(-4\kappa(\varphi_\nu-1)/c)'}=1+\frac{z\varphi''_\nu(z)}{\varphi'_\nu(z)}.  \]
	Then $p$ is analytic in $\mathbb{D}$ with $p(0)=1$.
	For $z\not=0$,  the differential equation \eqref{e4}  yields $4z\varphi'''_\nu(z)+4(\kappa+1)\varphi''_\nu(z)+c\varphi'_\nu(z)=0$.
	As $\varphi'_\nu(z)\not=0$ for all $z\in\mathbb{D}$,  dividing the previous equation by  $\varphi'_\nu(z)$ and then multiplying it by $z$, we obtain
	\[ 4 \left(\frac{z^2\varphi'''_\nu(z)}{\varphi'_\nu(z)}\right)+4(\kappa+1)\left(\frac{z\varphi''_\nu(z)}{\varphi'_\nu(z)} \right)+cz=0. \]
	By making use of the equation $z\varphi''_\nu(z)/\varphi'_\nu(z)=p(z)-1$, a straightforward calculation shows that
	\[\frac{z^2\varphi'''_\nu(z)}{\varphi'_\nu(z)} = zp'(z)+p^2(z)-3p(z)+2.\] Hence the function $p$ satisfies the differential equation
	\begin{equation*}\label{e11}
	4z p'(z)+4p^2(z)+4(\kappa-2)(p(z)-1) +cz-4=0\end{equation*}
	which is also true for $z=0$. Define a function $\psi(r,s;z):=4s+4r^2+4(\kappa-2)(r-1)+cz-4$ and let $\Omega:=\{0\}$. Then $\psi(p(z),zp'(z);z)\in\Omega$ for all $z\in\mathbb{D}$.  Again, we will employ Lemma \ref{lemA} to prove  that $p(z)\prec e^z$ for all $z\in\mathbb{D}$,  that is, we will show that $\psi(r,s;z)\notin \Omega$ where $r = e^{e^{i\theta}}$ and   $s = m e^{i\theta} e^{e^{i\theta}}$  for $z\in\mathbb{D}$,  $\theta \in [0,2\pi)$ and $m \geq 1$ under the given condition \eqref{e12}.
	Note that
	\begin{align*}
	|\psi(r,s;z)|&=4\left|s+r^2-1+(\kappa-2)(r-1)+\frac{1}{4}cz \right|\\
	&> 4 \left(|s+r^2-1| - |\kappa-2||r-1| -\frac{1}{4}|c| \right).
	\end{align*}
	For $r = e^{e^{i\theta}}$, $s = m e^{i\theta} e^{e^{i\theta}}$, $\theta\in[0,2\pi)$, $z\in \mathbb{D}$ and $m \geq 1$, we get
	\begin{align*}|s+r^2-1|^2& =(me^{\cos\theta}\cos(\theta+\sin\theta)+e^{2\cos\theta}\cos(2\sin\theta)-1)^2\\
	&\quad\;+(me^{\cos\theta}\sin(\theta+\sin\theta)+e^{2\cos\theta}\sin(2\sin\theta))^2:=g_1(\theta).
	\end{align*}
	Using  the second derivative test, the function $g_1$ attains its minimum value at $\theta=\pi$ so that
	\[\min_{\theta\in[0,2\pi)}g_1(\theta)=g_1(\pi)=\left(\frac{-m}{e}+\frac{1}{e^2}-1\right)^2\]
	which implies
	\[|s+r^2-1|\geq \frac{m}{e}-\frac{1}{e^2}+1 \geq  \frac{1}{e}-\frac{1}{e^2}+1.\]
	Similarly, we have
	\begin{align*}
	|r-1|^2&=(e^{\cos\theta}\cos(\sin\theta)-1)^2+e^{2\cos\theta}\sin^2(\sin\theta)\\
	&=1+e^{2\cos\theta}-2e^{\cos\theta}\cos(\sin\theta):=g_2(\theta).
	\end{align*}
	The function $g_2$ attains its maximum value at $\theta=0$ and thus $|r-1|\leq e-1$. Using these calculations, we obtain
	\begin{align*}
	|\psi(r,s;z)|=|\psi(e^{e^{i\theta}},me^{i\theta} e^{e^{i\theta}};z)|
	> 4\left(\frac{1}{e}-\frac{1}{e^2}+1 -(e-1)|\kappa-2|-\frac{1}{4}|c| \right)\geq 0
	\end{align*} using \eqref{e12}. Therefore $|\psi(r,s;z)|>0$ and hence Lemma \ref{lemA} completes the proof.
\end{proof}
Note that if \begin{equation}\label{eq}
| \kappa-3|  + \frac{1}{4(e-1)}|c|\leq\frac{e^2+e-1}{e^2(e-1)}
\end{equation}  then the function $-4(\kappa-1)(\varphi_{\nu-1}-1)/c\in\ke$ by Theorem  \ref{thmA} provided that $c\neq 0$ and $\re(\kappa) \geq(|c|/4)+1$. Using the famous Alexander duality theorem between the classes $\ke$ and $\se$, that is, $f\in\ke$ if and only if $zf'\in\se$, we obtain $-4(\kappa-1)z\varphi_{\nu-1}'/c\in\se$. Also the recurrence relation \eqref{e3} gives $cz\varphi_{\nu}(z)=-4(\kappa-1)z\varphi'_{\nu-1}(z)$. Consequently, we have the following result.

\begin{theorem}\label{cor1}
	Let the parameters $\kappa$, $c \in\mathbb{C}$  be constrained such that  $c\neq 0$, $\re(\kappa) \geq(|c|/4)+1$ and \eqref{eq} is satisfied, then the function  $\vartheta_\nu\in\se$ where $\vartheta_\nu(z)=z\varphi_\nu(z)$ for all $z\in\mathbb{D}$.
\end{theorem}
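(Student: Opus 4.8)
The plan is to deduce this result directly from Theorem \ref{thmA}, combined with the Alexander duality principle and the recurrence relation \eqref{e3}, exactly as foreshadowed in the discussion preceding the statement. The key observation is that the hypotheses imposed here are precisely the hypotheses of Theorem \ref{thmA} after the index shift $\nu\mapsto\nu-1$ (equivalently $\kappa\mapsto\kappa-1$), so no new differential subordination argument is needed.

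First I would apply Theorem \ref{thmA} with $\kappa$ replaced by $\kappa-1$. Under this substitution the parameter constraint $\re(\kappa-1)\geq|c|/4$ becomes precisely $\re(\kappa)\geq(|c|/4)+1$, while the subordination condition \eqref{e12}, read as $|(\kappa-1)-2|+\frac{1}{4(e-1)}|c|\leq\frac{e^2+e-1}{e^2(e-1)}$, becomes exactly \eqref{eq}. Thus Theorem \ref{thmA} yields $-4(\kappa-1)(\varphi_{\nu-1}-1)/c\in\ke$.

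Next I would invoke the Alexander duality theorem, namely $f\in\ke$ if and only if $zf'\in\se$. Differentiating $-4(\kappa-1)(\varphi_{\nu-1}-1)/c$ eliminates the constant term and gives $-4(\kappa-1)z\varphi'_{\nu-1}/c\in\se$. Finally I would apply the recurrence relation \eqref{e3} with the index shifted by one, that is $4(\kappa-1)\varphi'_{\nu-1}(z)=-c\varphi_\nu(z)$, which produces the identity $-4(\kappa-1)z\varphi'_{\nu-1}(z)/c=z\varphi_\nu(z)=\vartheta_\nu(z)$. Combining these identifications shows $\vartheta_\nu\in\se$, completing the argument.

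Because every ingredient is already in hand, there is no genuine analytic obstacle; the only point demanding care is the bookkeeping of the index shift. One must confirm that replacing $\kappa$ by $\kappa-1$ in Theorem \ref{thmA} corresponds consistently to replacing $\nu$ by $\nu-1$ (with $b$ and $c$ held fixed), that both parameter hypotheses transform into the ones stated here, and that the recurrence \eqref{e3} is invoked at the correct index so that the normalizing factor $-4(\kappa-1)/c$ matches on both sides of the identity.
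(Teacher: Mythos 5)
Your proposal is correct and follows exactly the paper's own argument: apply Theorem \ref{thmA} at the shifted index $\nu-1$ (so that $\re(\kappa-1)\geq|c|/4$ and \eqref{e12} become the stated hypotheses), use the Alexander duality $f\in\ke \iff zf'\in\se$, and then identify $-4(\kappa-1)z\varphi'_{\nu-1}/c$ with $\vartheta_\nu$ via the recurrence \eqref{e3}. The index bookkeeping you flag is handled the same way in the paper, so there is nothing to add.
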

The particular choices of $b$ and $c$ in Theorems \ref{thmA} and \ref{cor1} leads to the corresponding results for Bessel ($b=c=1$), modified Bessel ($b=1$ and $c=-1$), spherical Bessel ($b=2$ and $c=1$) and modified spherical Bessel ($b=2$ and $c=-1$) functions. For Bessel and modified Bessel function, we have $|c|=1$ and $\kappa=\nu+1$ so that the following corollary is obtained.
\begin{corollary}\label{cora}
	Let the parameter $\nu\in\mathbb{C}$.  For the functions
	\[\mathcal{J}_\nu(z):=2^\nu \Gamma(\nu+1)z^{-\nu/2} J_\nu(\sqrt{z})\]
	and
	\[\mathcal{I}_\nu(z):=2^\nu \Gamma(\nu+1)z^{-\nu/2} I_\nu(\sqrt{z})\]
	where $J_\nu$ and $I_\nu$ are the Bessel function and the modified Bessel function of the first kind of order $\nu$ defined by \eqref{eqj} and \eqref{eqi} respectively, the following assertions hold.
	\begin{enumerate}[(a)]
		\item If $\re(\nu)\geq-0.75$ and
		\[|\nu-1|\leq\frac{1}{e^2}+\frac{3}{4(e-1)} \] then the functions $-4(\nu+1)(\mathcal{J}_\nu-1)\in\ke$ and $-4(\nu+1)(\mathcal{I}_\nu-1)\in\ke$.
		\item If $\re(\nu)\geq0.25$ and \[|\nu-2|\leq\frac{1}{e^2}+\frac{3}{4(e-1)} \] then the functions $z\mathcal{J}_\nu\in\se$ and $z\mathcal{I}_\nu\in\se$.
	\end{enumerate}
\end{corollary}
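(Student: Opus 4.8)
The plan is to obtain both parts as direct specializations of Theorems \ref{thmA} and \ref{cor1}, so that the only substantive work is matching the parameters and simplifying the threshold constant. First I would record that the Bessel and modified Bessel cases correspond to $b=1$ together with $c=1$ and $c=-1$ respectively; comparing the normalization \eqref{phi} with the series \eqref{eqj} and \eqref{eqi} shows $\mathcal{J}_\nu=\varphi_{\nu,1,1}$ and $\mathcal{I}_\nu=\varphi_{\nu,1,-1}$. In either case $\kappa=\nu+(b+1)/2=\nu+1$ and $|c|=1$, and the hypothesis $c\neq 0$ of both theorems holds automatically.

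For part (a) I would substitute $\kappa=\nu+1$ and $|c|=1$ into Theorem \ref{thmA}. The hypothesis $\re(\kappa)\geq|c|/4$ becomes $\re(\nu+1)\geq 1/4$, i.e. $\re(\nu)\geq-3/4$, which is the stated condition $\re(\nu)\geq-0.75$. The subordination constraint \eqref{e12} turns into $|\nu-1|\leq (e^2+e-1)/(e^2(e-1))-1/(4(e-1))$, and the conclusion $-4\kappa(\varphi_\nu-1)/c\in\ke$ specializes to the two stated memberships. For part (b) the argument is identical but invokes Theorem \ref{cor1}: the hypothesis $\re(\kappa)\geq|c|/4+1$ becomes $\re(\nu)\geq 1/4=0.25$, inequality \eqref{eq} becomes $|\nu-2|\leq (e^2+e-1)/(e^2(e-1))-1/(4(e-1))$, and the conclusion $\vartheta_\nu=z\varphi_\nu\in\se$ reads $z\mathcal{J}_\nu\in\se$ and $z\mathcal{I}_\nu\in\se$ (both already lying in $\mathcal{A}$, so no normalizing factor is needed here).

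The one computation to carry out is to verify that the right-hand bound in each case agrees with the displayed constant, that is
\[\frac{e^2+e-1}{e^2(e-1)}-\frac{1}{4(e-1)}=\frac{1}{e^2}+\frac{3}{4(e-1)};\]
placing both sides over the common denominator $4e^2(e-1)$ reduces each to $(3e^2+4e-4)/(4e^2(e-1))$, confirming the identity.

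I expect no genuine obstacle, since the result is a clean parameter specialization of results already established. The only point requiring care is the normalizing constant $-4\kappa(\varphi_\nu-1)/c$ appearing in Theorem \ref{thmA}: its leading Taylor coefficient equals $1$ for every $c\neq 0$ precisely because of the factor $1/c$, so in the modified Bessel case ($c=-1$) the relevant member of $\ke$ is $-4(\nu+1)(\mathcal{I}_\nu-1)/(-1)=4(\nu+1)(\mathcal{I}_\nu-1)$; tracking this sign keeps the specialized function inside the normalized class $\mathcal{A}$ on which $\ke$ is defined.
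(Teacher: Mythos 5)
Your proposal is correct and takes exactly the paper's route: the corollary is obtained there by the same specialization $b=1$, $c=\pm 1$ (hence $\kappa=\nu+1$, $|c|=1$) of Theorems \ref{thmA} and \ref{cor1}, together with the same simplification $\frac{e^2+e-1}{e^2(e-1)}-\frac{1}{4(e-1)}=\frac{1}{e^2}+\frac{3}{4(e-1)}$, which you verified correctly. Your closing remark about the sign for $c=-1$ --- that the normalized member of $\mathcal{A}$ is $4(\nu+1)(\mathcal{I}_\nu-1)$ rather than $-4(\nu+1)(\mathcal{I}_\nu-1)$ as the corollary literally states --- is a genuine point the paper glosses over, and is harmless since $1+zf''/f'$ is invariant under $f\mapsto\lambda f$ for $\lambda\neq 0$.
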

To illustrate Corollary \ref{cora}, consider the functions
\[\mathcal{J}_{1/2}(z)=\sqrt{\frac{\pi}{2}} z^{-1/4}J_{1/2}(\sqrt{z})=\frac{\sin\sqrt{z}}{\sqrt{z}}\]
and
\[\mathcal{I}_{1/2}(z)=\sqrt{\frac{\pi}{2}} z^{-1/4}I_{1/2}(\sqrt{z})=\frac{\sinh\sqrt{z}}{\sqrt{z}}.\]
These functions satisfy the hypothesis of Corollary \ref{cora}(a). Therefore both the functions $-6(\mathcal{J}_{1/2}-1)$ and $-6(\mathcal{I}_{1/2}-1)$ belong to the class $\ke$. In terms of subordination, it can be written as
\[1+\frac{z \mathcal{J}''_{1/2}(z)}{\mathcal{J}'_{1/2}(z)}=\frac{(1-z)\sin\sqrt{z}-\sqrt{z}\cos\sqrt{z}}{2\sqrt{z}\cos\sqrt{z}-2\sin\sqrt{z}} \prec e^z \]and
\[1+\frac{z \mathcal{I}''_{1/2}(z)}{\mathcal{I}'_{1/2}(z)}=\frac{(1+z)\sinh\sqrt{z}-\sqrt{z}\cosh\sqrt{z}}{2\sqrt{z}\cosh\sqrt{z}-2\sinh\sqrt{z}} \prec e^z. \]
Similarly, the functions
\[\mathcal{J}_{3/2}(z)=3\sqrt{\frac{\pi}{2}} z^{-3/4}J_{3/2}(\sqrt{z})\quad \mbox{and}\quad \mathcal{I}_{3/2}(z)=3\sqrt{\frac{\pi}{2}} z^{-3/4}I_{3/2}(\sqrt{z})\]
satisfy Corollary \ref{cora}(b) and therefore the functions
\[z\mathcal{J}_{3/2}(z)=3
\left(\frac{\sin\sqrt{z}}{\sqrt{z}}-\cos\sqrt{z}\right)\quad \mbox{and}\quad z\mathcal{I}_{3/2}(z)=3
\left(\cosh\sqrt{z}-\frac{\sinh\sqrt{z}}{\sqrt{z}}\right) \]
are in $\mathcal{S}^*_e$. Also, the similar reasoning shows that the functions
\[z\mathcal{J}_{5/2}(z)=\frac{15((3-z)\sin\sqrt{z}-3\sqrt{z}\cos \sqrt{z})}{z^{3/2}}\]
and
\[z\mathcal{I}_{5/2}(z)=\frac{15((3+z)\sinh\sqrt{z}-3\sqrt{z}\cos \sqrt{z})}{z^{3/2}}\]
belong to the class $\mathcal{S}^*_e$.

In the similar fashion, if we take $|c|=1$ and $\kappa=\nu+(3/2)$, we can obtain the following result for spherical Bessel and modified spherical Bessel functions.
\begin{corollary}\label{sbf}
	Let the parameter $\nu\in\mathbb{C}$. For the functions
	\begin{equation*}
	\mathfrak{j}_\nu(z):=\frac{1}{\sqrt{\pi}}2^{\nu+1} \Gamma\left(\nu+\frac{3}{2}\right)z^{-\nu/2} j_\nu(\sqrt{z}) \end{equation*}
	and
	\begin{equation*}
	\mathfrak{i}_\nu(z):=\frac{1}{\sqrt{\pi}}2^{\nu+1} \Gamma\left(\nu+\dfrac{3}{2}\right)z^{-\nu/2} i_\nu(\sqrt{z}) \end{equation*}
	where $j_\nu$ and $i_\nu$ are the spherical Bessel function and the modified spherical Bessel function of the first kind of order $\nu$ defined by \eqref{kv} and \eqref{yv} respectively, the following assertions hold.
	\begin{enumerate}[(a)]
		\item If $\re(\nu)\geq-1.25$ and \begin{equation*}|2\nu-1|\leq \dfrac{2}{e^2}+\dfrac{3}{2(e-1)}\end{equation*}
		then the functions $-2(2\nu+3)(\mathfrak{j}_\nu-1)\in\ke$ and $-2(2\nu+3)(\mathfrak{i}_\nu-1)\in\ke$.
		\item If $\re(\nu)\geq-0.25$ and \begin{equation*}|2\nu-3|\leq \dfrac{2}{e^2}+\dfrac{3}{2(e-1)}\end{equation*} then the functions $z\mathfrak{j}_\nu\in\se$ and $z\mathfrak{i}_\nu\in\se$.
	\end{enumerate}
\end{corollary}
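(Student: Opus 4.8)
The plan is to obtain Corollary \ref{sbf} as a direct specialization of Theorems \ref{thmA} and \ref{cor1}, mirroring the derivation of Corollary \ref{cora} for the ordinary and modified Bessel functions. The first step is to recognize the normalized spherical Bessel functions as particular instances of $\varphi_{\nu,b,c}$. Comparing the transformation \eqref{phi} with the series \eqref{kv} and \eqref{yv}, and using $j_\nu = \sqrt{\pi}\,\omega_{\nu,2,1}/2$ together with $i_\nu = \sqrt{\pi}\,\omega_{\nu,2,-1}/2$, I would verify that $\mathfrak{j}_\nu = \varphi_{\nu,2,1}$ and $\mathfrak{i}_\nu = \varphi_{\nu,2,-1}$; substituting $b=2$ into \eqref{phi} reproduces precisely the normalizing constant $2^{\nu+1}\Gamma(\nu+3/2)/\sqrt{\pi}$ that appears in the definitions of $\mathfrak{j}_\nu$ and $\mathfrak{i}_\nu$. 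For both choices, $\kappa = \nu + (b+1)/2 = \nu + 3/2$ and $|c| = 1$.

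For part (a) I would apply Theorem \ref{thmA} with $\kappa = \nu+3/2$ and $|c|=1$. The hypothesis $\re(\kappa) \geq |c|/4$ becomes $\re(\nu) + 3/2 \geq 1/4$, that is, $\re(\nu) \geq -5/4 = -1.25$, which is the stated restriction. In the inequality \eqref{e12} I would use $|\kappa-2| = |\nu - 1/2| = \tfrac12|2\nu - 1|$, so that \eqref{e12} rearranges to $|2\nu-1| \leq 2\bigl(\tfrac{e^2+e-1}{e^2(e-1)} - \tfrac{1}{4(e-1)}\bigr)$; a short computation over the common denominator $4e^2(e-1)$ shows the right-hand side equals $\tfrac{2}{e^2} + \tfrac{3}{2(e-1)}$, exactly the bound stated in (a). The conclusion of Theorem \ref{thmA}, namely $-4\kappa(\varphi_\nu-1)/c \in \ke$ with $-4\kappa = -2(2\nu+3)$, then delivers the asserted memberships for $\mathfrak{j}_\nu$ (case $c=1$) and $\mathfrak{i}_\nu$ (case $c=-1$).

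Part (b) follows by the same mechanism, now invoking Theorem \ref{cor1}. The hypothesis $\re(\kappa) \geq |c|/4 + 1$ becomes $\re(\nu) \geq -1/4 = -0.25$, and the inequality \eqref{eq} involves $|\kappa-3| = |\nu - 3/2| = \tfrac12|2\nu-3|$, which rearranges by the identical constant computation to the stated bound $|2\nu-3| \leq \tfrac{2}{e^2} + \tfrac{3}{2(e-1)}$. Theorem \ref{cor1} then gives $\vartheta_\nu = z\varphi_\nu \in \se$, and substituting $\varphi_\nu = \mathfrak{j}_\nu$ and $\varphi_\nu = \mathfrak{i}_\nu$ yields $z\mathfrak{j}_\nu \in \se$ and $z\mathfrak{i}_\nu \in \se$.

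Because every step reduces to a theorem already proved, there is no real analytic difficulty; the work is entirely bookkeeping. The single point I would verify with care is the constant identity $\tfrac{e^2+e-1}{e^2(e-1)} - \tfrac{1}{4(e-1)} = \tfrac{1}{e^2} + \tfrac{3}{4(e-1)}$, which is what forces the translated hypotheses to coincide with the bounds in (a) and (b), together with the normalizing role of the factor $1/c$, which guarantees that $-4\kappa(\varphi_\nu-1)/c$ lies in $\mathcal{A}$ for each sign of $c$.
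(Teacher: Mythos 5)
Your proposal is correct and follows essentially the same route as the paper, which derives Corollary \ref{sbf} exactly as you do — by specializing Theorems \ref{thmA} and \ref{cor1} to $b=2$, $|c|=1$, $\kappa=\nu+3/2$ (the paper's entire proof is the remark ``in the similar fashion, if we take $|c|=1$ and $\kappa=\nu+(3/2)$\ldots''), and your key constant identity $\frac{e^2+e-1}{e^2(e-1)}-\frac{1}{4(e-1)}=\frac{1}{e^2}+\frac{3}{4(e-1)}$ is verified. One minor remark: for $c=-1$ Theorem \ref{thmA} literally yields $2(2\nu+3)(\mathfrak{i}_\nu-1)\in\ke$ (the factor $-4\kappa/c$ is positive there), so the minus sign in the stated conclusion for $\mathfrak{i}_\nu$ is a sign slip already present in the paper's statement (and likewise in Corollary \ref{cora} for $\mathcal{I}_\nu$), which your write-up reproduces rather than introduces.
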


It has been proved in \cite{MR3394060} that both $\ke$ and $\se$ are closed under convolution with the convex functions. We will make use of this observation to study the behaviour of an integral operator. The Libera operator $L:\mathcal{A}\to\mathcal{A}$ is defined as
\[ L[f](z):=\frac{2}{z}\int_{0}^{z} f(t) dt=\frac{-2(z+\log(1-z))}{z}\ast f(z) \]
where $f \in \mathcal{A}$ and $z\in \mathbb{D}$. Theorems \ref{thmA} and \ref{cor1} yield the following result which helps in  constructing starlike and convex functions involving $\varphi_\nu$ and $\vartheta_\nu$.

\begin{corollary}
	If the parameters $\nu$, $b$, $c$ are constrained as in Theorem \ref{thmA}, then the function $(-4\kappa(\varphi_\nu-1)/c) \ast f \in\ke$ for every convex function $f\in\mathcal{A}$ and in particular, $L[-4\kappa(\varphi_\nu-1)/c]$ belongs to the class $\ke$. Similarly, if the parameters $\nu$, $b$, $c$ are constrained as in Theorem \ref{cor1}, then the function $\vartheta_\nu \ast f \in\se$ for every convex function $f\in\mathcal{A}$ and in particular, $L[\vartheta_\nu]$ belongs to the class $\se$.
\end{corollary}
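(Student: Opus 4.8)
The plan is to reduce the whole statement to three facts that are already on the table: the membership results of Theorems \ref{thmA} and \ref{cor1}, the convolution-closure property of $\ke$ and $\se$ recorded from \cite{MR3394060}, and the representation of the Libera operator as a convolution given immediately above the statement. First I would invoke Theorem \ref{thmA} to record that, under the stated constraints on $\nu$, $b$, $c$, the function $g:=-4\kappa(\varphi_\nu-1)/c$ lies in $\ke$. Since \cite{MR3394060} shows that $\ke$ is preserved under convolution with every convex function, the containment $g\ast f\in\ke$ for each convex $f\in\mathcal{A}$ is then immediate. The starlike half is entirely parallel: Theorem \ref{cor1} places $\vartheta_\nu$ in $\se$, and the analogous closure of $\se$ under convolution with convex functions (again from \cite{MR3394060}) yields $\vartheta_\nu\ast f\in\se$ for every convex $f$.

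It then remains to settle the two ``in particular'' assertions about the Libera operator. The key observation is the identity displayed just above the statement, namely that $L[h]=\ell\ast h$ with $\ell(z)=-2(z+\log(1-z))/z$; expanding the logarithm gives the normalized series $\ell(z)=z+\sum_{n\geq2}\tfrac{2}{n+1}z^{n}\in\mathcal{A}$. Thus, \emph{provided} $\ell$ is itself a convex function, taking $f=\ell$ in the two convolution statements just obtained forces $L[g]=\ell\ast g\in\ke$ and $L[\vartheta_\nu]=\ell\ast\vartheta_\nu\in\se$, which is exactly what is claimed.

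The one point that genuinely requires verification --- and the step I expect to be the main obstacle --- is the convexity of the Libera kernel $\ell$. This is classical and can be seen cleanly via convolution algebra: the function $z/(1-z)$ maps $\mathbb{D}$ onto the half-plane $\re w>-1/2$ and is therefore convex, and since $z/(1-z)$ is the convolution unit on $\mathcal{A}$ one has $\ell=\ell\ast\bigl(z/(1-z)\bigr)=L[z/(1-z)]$; as the Libera integral operator is known to carry convex functions to convex functions, $\ell$ is convex. Alternatively one may verify directly that $\re\bigl(1+z\ell''(z)/\ell'(z)\bigr)>0$ throughout $\mathbb{D}$. Once the convexity of $\ell$ is secured, no further computation is needed and the two convolution statements close the proof.
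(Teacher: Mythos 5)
Your proposal is correct and follows essentially the same route as the paper, which states the corollary as an immediate consequence of Theorems \ref{thmA} and \ref{cor1} together with the convolution-closure of $\ke$ and $\se$ under convex functions from \cite{MR3394060} and the representation $L[f]=\bigl(-2(z+\log(1-z))/z\bigr)\ast f$. Your explicit verification that the Libera kernel $\ell=L[z/(1-z)]$ is convex is a point the paper leaves implicit, and your argument for it (convexity of the half-plane map $z/(1-z)$ plus convexity-preservation of the Libera operator) is sound.
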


Let us make use of this corollary to obtain new functions in the classes $\ke$ and $\se$. By the discussion succeeding Corollary \ref{cora}, it follows that
\[L[-6(\mathcal{J}_{1/2}-1)](z)=\frac{12}{z}(z+2\cos\sqrt{z}-2)\]
\[L[-6(\mathcal{I}_{1/2}-1)](z)=\frac{12}{z}(z-2\cosh\sqrt{z}+2)\]
belong to $\ke$. Similarly, it can be deduced that
\[L[z\mathcal{J}_{3/2}](z)=-\frac{12}{z}(\sqrt{z}\sin\sqrt{z}+2\cos \sqrt{z}-2)\]
\[L[z\mathcal{I}_{3/2}](z)=\frac{12}{z}(\sqrt{z}\sinh\sqrt{z}-2\cosh \sqrt{z}+2)\]
are in the class $\se$.

The last theorem of this section gives a sufficient condition under which the function $2^\nu \Gamma(\kappa) z^{1-\nu}\omega_{\nu,b,c}\in\se$ where $\omega_{\nu,b,c}$ is given by \eqref{e6}. To prove this, we shall be requiring the following result by Miller and Mocanu \cite[Theorem 2.3h, p.~34]{MR1760285} for $M=1/4$ and $a=0$.
\begin{lemma}\label{1/4}
	Let $\Omega$ be a subset of $\mathbb{C}$ and the function $\psi \colon \mathbb{C}^3\times \mathbb{D}\to\mathbb{C}$ satisfies the admissibility condition $ \psi(r,s,t;z) \notin \Omega$ whenever $r = e^{i\theta}/4$, $s = m e^{i\theta}/4$ and $\re(1+t/s) \geq m$ where $z\in\mathbb{D}$, $\theta \in [0,2\pi)$ and $m \geq 1$. If $p$ is an analytic function in $\mathbb{D}$ with $p(0)=0$ and $\psi(p(z), zp'(z), z^2 p''(z); z) \in \Omega$ for $z\in \mathbb{D}$, then   $|p(z)|<1/4$.
\end{lemma}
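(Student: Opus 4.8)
The plan is to derive this lemma as the disk-dominant specialization of the foundation result Theorem \ref{thm1}, taking the dominant $q$ to be the linear map $q(z) = z/4$, whose image is exactly the disk $\{w : |w| < 1/4\}$. This is the instance $M = 1/4$, $a = 0$ of the general Miller--Mocanu boundedness theorem, and it can be obtained entirely from the machinery already recorded in Definition \ref{def2} and Theorem \ref{thm1}, without appealing to anything outside the excerpt.

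First I would check that $q(z) = z/4$ qualifies as a dominant: it is entire and univalent on $\overline{\mathbb{D}}$, so $\textbf{E}(q) = \emptyset$, and $q'(\zeta) = 1/4 \neq 0$ on $\partial\mathbb{D}$; since $q(0) = 0$, we have $q \in \mathcal{Q}_0$. Next I would specialize the admissibility condition of Definition \ref{def2} to this $q$. Writing $\zeta = e^{i\theta}$ with $\theta \in [0, 2\pi)$ gives $r = q(\zeta) = e^{i\theta}/4$ and $s = m\zeta q'(\zeta) = m e^{i\theta}/4$, while $q''(\zeta) = 0$ forces $1 + \zeta q''(\zeta)/q'(\zeta) = 1$, so the second-order requirement $\re(1 + t/s) \geq m\,\re(1 + \zeta q''(\zeta)/q'(\zeta))$ reduces to $\re(1 + t/s) \geq m$. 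These are precisely the three conditions hypothesized in the lemma, so the given $\psi$ lies in $\Psi(\Omega, q)$.

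Finally, since $p$ is analytic with $p(0) = 0 = q(0)$ and satisfies $\psi(p(z), zp'(z), z^2 p''(z); z) \in \Omega$, Theorem \ref{thm1} applies and yields $p(z) \prec q(z) = z/4$. As $q$ is univalent with $q(\mathbb{D}) = \{w : |w| < 1/4\}$, this subordination is equivalent to the containment $p(\mathbb{D}) \subseteq q(\mathbb{D})$, that is, $|p(z)| < 1/4$ for all $z \in \mathbb{D}$, as claimed. I do not expect any genuine obstacle in this argument: the entire content is the routine verification that $q(z) = z/4$ meets the hypotheses of the foundation theorem and that its admissibility inequality simplifies as stated. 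The only point meriting care is the vanishing of $q''$, which is exactly what collapses the second-order condition to $\re(1 + t/s) \geq m$, together with correctly reading off $r$ and $s$ from Definition \ref{def2}.
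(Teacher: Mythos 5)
Your proposal is correct and matches the intended derivation: the paper does not prove this lemma but cites it as Miller--Mocanu's Theorem 2.3h with $M=1/4$, $a=0$, and that cited result is obtained exactly as you do, by feeding the dominant $q(z)=z/4$ (so $q\in\mathcal{Q}_0$, $\textbf{E}(q)=\emptyset$, $q''\equiv 0$) into Definition \ref{def2} and Theorem \ref{thm1} and reading off $r=e^{i\theta}/4$, $s=me^{i\theta}/4$, $\re(1+t/s)\geq m$. Your verification of the admissibility specialization and the final step $p\prec q \Leftrightarrow |p(z)|<1/4$ are both sound, so there is nothing to correct.
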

As noticed earlier, the admissibility condition $\psi(r,s,t;z) \notin \Omega$ in Lemma \ref{1/4} is verified for all $r=e^{i\theta}/4$, $s=me^{i\theta}/4$ and $\re((t+s)e^{-i\theta})\geq1/4$ for all $\theta\in[0,2\pi)$ and $m\geq1$. Using this idea, we prove the following theorem.
\begin{theorem}\label{w}
	If the parameters $\kappa\in\mathbb{R}$, $c \in\mathbb{C}$  are constrained such that \begin{equation}\label{hyp}
	\kappa\geq \max\left\{\dfrac{1}{4}|c|+1,\dfrac{5}{3}|c|+\dfrac{3}{4}\right\}
	\end{equation} then the function $2^\nu \Gamma(\kappa) z^{1-\nu}\omega_{\nu,b,c}\in\se$.
\end{theorem}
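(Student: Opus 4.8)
The plan is to reduce the statement to a subordination for $\varphi_\nu$ and then invoke Lemma \ref{1/4}, exactly in the spirit of the proofs of Theorems \ref{thmB} and \ref{thmA}. First I would unwind the definitions. Writing $\kappa=\nu+(b+1)/2$ in the series \eqref{e6} and comparing with \eqref{e2}, a direct computation gives
\[
f(z):=2^\nu\Gamma(\kappa)z^{1-\nu}\omega_{\nu,b,c}(z)=\sum_{n\geq0}\frac{(-c/4)^n}{n!(\kappa)_n}z^{2n+1}=z\varphi_\nu(z^2),
\]
so $f\in\mathcal A$ and $zf'(z)/f(z)=1+2z^2\varphi_\nu'(z^2)/\varphi_\nu(z^2)$. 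The first bound in \eqref{hyp}, namely $\kappa\geq|c|/4+1$, places $\varphi_\nu\in\mathcal P_e$ by Theorem \ref{thmB}; in particular $\re\varphi_\nu>0$, so $\varphi_\nu$ is zero free on $\mathbb D$ and the quotient is analytic there. This explains the role of the maximum: the first term secures well-definedness, while the second will drive the admissibility estimate.

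Next I would set $p(z):=z^2\varphi_\nu'(z^2)/\varphi_\nu(z^2)$, so that $p$ is analytic, $p(0)=0$ and $zf'/f=1+2p$. Starting from \eqref{e4} in the form $4w\varphi_\nu''(w)+4\kappa\varphi_\nu'(w)+c\varphi_\nu(w)=0$, I would first show that $u(w):=w\varphi_\nu'(w)/\varphi_\nu(w)$ obeys the first order equation $4wu'(w)+4u^2(w)+4(\kappa-1)u(w)+cw=0$, and then substitute $w=z^2$ to obtain
\[
2zp'(z)+4p^2(z)+4(\kappa-1)p(z)+cz^2=0\qquad(z\in\mathbb D).
\]
Putting $\psi(r,s;z):=2s+4r^2+4(\kappa-1)r+cz^2$ and $\Omega:=\{0\}$, this reads $\psi(p(z),zp'(z);z)\in\Omega$, so Lemma \ref{1/4} applies once the admissibility condition is verified (here $\psi$ does not involve the third slot $t$, so the condition on $t$ in Lemma \ref{1/4} is vacuous).

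The heart of the argument is then to check that $\psi(r,s;z)\neq0$ whenever $r=e^{i\theta}/4$, $s=me^{i\theta}/4$, $\theta\in[0,2\pi)$, $m\geq1$ and $z\in\mathbb D$. Substituting these values gives $\psi=e^{i\theta}(m/2+\kappa-1)+e^{2i\theta}/4+cz^2$; since $\kappa\in\mathbb R$, pulling out $e^{i\theta}$ yields $|e^{i\theta}(m/2+\kappa-1)+e^{2i\theta}/4|=|(m/2+\kappa-1)+e^{i\theta}/4|\geq(m/2+\kappa-1)-1/4$, and combining this with $|cz^2|<|c|$ and $m\geq1$ produces a lower bound for $|\psi|$ that is strictly positive precisely because of the second inequality in \eqref{hyp}. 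Lemma \ref{1/4} then gives $|p(z)|<1/4$ on $\mathbb D$, whence $|zf'(z)/f(z)-1|=2|p(z)|<1/2$. Finally, the same computation already performed for $g_2$ in the proof of Theorem \ref{thmA} shows $\min_{\theta}|e^{e^{i\theta}}-1|=1-1/e>1/2$, so the disk $\{w:|w-1|<1/2\}$ lies inside $\exp(\mathbb D)$; as $(zf'/f)(0)=1=e^0$ and $e^z$ is univalent, this forces $zf'/f\prec e^z$, i.e.\ $f\in\se$.

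The step I expect to be the main obstacle is the admissibility estimate: one must handle the phase term $e^{i\theta}(m/2+\kappa-1)$, the quadratic contribution $e^{2i\theta}/4$ and the free term $cz^2$ together carefully enough to isolate the bound \eqref{hyp}, using crucially that $\kappa$ is real and that $m\geq1$ is the worst case. A secondary but essential point is the geometric containment $1-1/e>1/2$, which is what lets the conclusion $|p|<1/4$ of Lemma \ref{1/4} translate into genuine subordination of $zf'/f$ to $e^z$ rather than just a modulus bound.
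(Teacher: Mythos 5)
Your proposal is correct, and its skeleton coincides with the paper's: both use Theorem \ref{thmB} to secure $\varphi_\nu\neq 0$, both reduce the claim to the bound $|w\varphi_\nu'(w)/\varphi_\nu(w)|<1/4$ via Lemma \ref{1/4} applied to a differential equation coming from \eqref{e4}, and both finish through the identity $zf'(z)/f(z)=1+2z^2\varphi_\nu'(z^2)/\varphi_\nu(z^2)$. The execution differs in two respects, both to your advantage. First, you keep the first-order Riccati equation $4wu'(w)+4u^2(w)+4(\kappa-1)u(w)+cw=0$ (legitimate: since $\varphi_\nu\neq0$, the identity $q(z)\varphi_\nu(z)\equiv0$ already forces $q\equiv0$), so your admissible function has only two slots and the $t$-condition in Lemma \ref{1/4} is vacuous; the paper instead differentiates $q(z)\varphi_\nu(z)=0$ once more to get $zq'(z)+q(z)p(z)=0$, hence a second-order equation, and runs a three-slot admissibility check with the constraint $\re\bigl((t+s)e^{-i\theta}\bigr)\geq 1/4$ --- the cubic and cross terms there are exactly what force the constant $5|c|/3+3/4$ in \eqref{hyp}. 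Your two-slot estimate needs only $\kappa\geq |c|+3/4$ (and only $\kappa\geq|c|+1/4$ if you apply the lemma to $u$ on $\mathbb{D}$ and substitute $w=z^2$ afterwards, which is equally valid since $\{z^2:z\in\mathbb{D}\}=\mathbb{D}$), so you in fact prove a slightly stronger theorem and use \eqref{hyp} with room to spare. Second, for the endgame the paper bounds $|\log(zh'(z)/h(z))|\leq 3|z^2\varphi_\nu'(z^2)/\varphi_\nu(z^2)|<3/4<1$ using $|\log(1+w)|\leq 3|w|/2$ for $|w|<1/2$; your route via $\mathrm{dist}(1,\partial\exp(\mathbb{D}))=1-1/e>1/2$ together with the subordination principle for univalent targets is equivalent in content (the paper's logarithm inequality is precisely a proof that the disk $|w-1|<1/2$ sits inside $\exp(\mathbb{D})$). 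Your appeal to the minimum of $|e^{e^{i\theta}}-1|$ being attained at $\theta=\pi$ is asserted rather than verified, but that matches the level of detail the paper itself allows for its extremal claims about $g_1$ and $g_2$, and the claim is true.
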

\begin{proof}
	As $\kappa \geq |c|/4+1$, $\varphi_\nu(z)\neq 0$ for all $z\in \mathbb{D}$ by Theorem \ref{thmB}. Therefore the function $p\colon\mathbb{D}\to\mathbb{C}$ defined by \[p(z)=\frac{z\varphi'_\nu(z)}{\varphi_\nu(z)} \]
	is analytic in $\mathbb{D}$ with $p(0)=0$. Firstly, we claim that $|p(z)|<1/4$ for all $z\in \mathbb{D}$. Since $p(z)\varphi_\nu(z)=z\varphi'_\nu(z)$ and the function $\varphi_\nu$ satisfies \eqref{e4}, it follows that  $(4zp'(z)+4p^2(z)+4(\kappa-1) p(z)+cz)\varphi_\nu(z)=0$ for all $z\in\mathbb{D}$.  Set $q(z):= 4zp'(z)+4p^2(z)+4(\kappa-1) p(z)+cz$, then the preceding equation can be rewritten as $q(z)\varphi_\nu(z)=0$ for all $z\in\mathbb{D}$. Differentiating this equation and then multiplying it by $z$ gives $\big(zq'(z)+q(z)p(z)\big)\varphi_\nu(z)=0$ for all $z\in\mathbb{D}$. As $\varphi_\nu(z)\ne0$ for all $z\in\mathbb{D}$, we have $zq'(z)+q(z)p(z)=0$ for all $z\in\mathbb{D}$. Hence the function $p$ satisfies the differential equation \[4\big(z^2p''(z)+\kappa zp'(z)+3zp(z)p'(z)+p^3(z)+(\kappa-1)p^2(z)\big)+(p(z)+1)cz=0. \] Rearrangement of the terms yields
	\begin{equation}4\big(z^2p''(z)+zp'(z)+(\kappa-1) (zp'(z)+p^2(z))+3zp(z)p'(z)+p^3(z)\big)+(p(z)+1)cz=0. \label{pz}\end{equation}
	Define a function $\psi(r,s,t;z):\mathbb{C}^3\times\mathbb{D}\to\mathbb{C}$ by \[\psi(r,s,t;z)=4\big(t+s+(\kappa-1)(s+r^2)+3sr+r^3\big)+(r+1)cz\] and suppose that $\Omega:=\{0\}$. Then \eqref{pz} can be written as $\psi(p(z),zp'(z),z^2p''(z);$ $z)\in\Omega$.
	To prove our claim, we shall use Lemma \ref{1/4} which will be apt if we prove that  $\psi(r,s,t;z)\notin\Omega$ whenever $r=e^{i\theta}/4$, $s=me^{i\theta}/4$ and  $\re((t+s)e^{-i\theta})\geq1/4$ for all $\theta\in[0,2\pi)$ and $m\geq1$. Using the given hypothesis \eqref{hyp}, it is easy to see that
	\begin{align*}
	& |\psi(r,s,t;z)| \\ & = \left|4\left(t+s+(\kappa-1)\left(\dfrac{me^{i\theta}}{4}+\dfrac{e^{2i\theta}}{16}\right)+\dfrac{3}{16}me^{2i\theta}+\dfrac{e^{3i\theta}}{64}\right)+\left(\dfrac{e^{i\theta}}{4}+1\right)cz\right|\\
	&> \left|4(t+s)e^{-i\theta}+(\kappa-1)\left(m+\dfrac{e^{i\theta}}{4}\right)+\dfrac{3}{4}me^{i\theta}\right|-\dfrac{1}{16}-\left|\dfrac{e^{i\theta}}{4}+1\right| \cdot |c|\\
	& \geq 4\re\left((t+s)e^{-i\theta}\right)+(\kappa-1)\left(m+\dfrac{\cos\theta}{4}\right)+\dfrac{3}{4}m \cos\theta-\dfrac{1}{16}-\left|\dfrac{e^{i\theta}}{4}+1\right| \cdot |c|\\
	& \geq\dfrac{3}{4}\kappa-\dfrac{9}{16}- \dfrac{5}{4}|c|\geq0.
	\end{align*} 	
	Thus $|\psi(r,s,t;z)|\ne 0$ and using  Lemma \ref{1/4}, we conclude that $|p(z)|<1/4$ for all $z\in\mathbb{D}$, that is
	\[\left|\frac{z\varphi'_\nu(z)}{\varphi_\nu(z)}\right|<\frac{1}{4}\quad (z\in \mathbb{D}).\]
	Now let us suppose $h(z):=2^\nu \Gamma(\kappa)z^{1-\nu}\omega_{\nu,b,c}(z)$.   Using \eqref{phi}, it follows that $h(z)=z\varphi_\nu(z^2)$.  We need to show that $h\in\se$, that is, $|\log zh'(z)/h(z)|<1$ for all $z\in \mathbb{D}$. By making use of the fact that if $|z|<1/2$, then $|\log(1+z)|\leq 3|z|/2$ (see \cite[p.\ 165]{MR1344449}) under the given condition \eqref{hyp}, we have
	\begin{equation*}
	\left|\log\dfrac{zh'(z)}{h(z)}\right|=\left|\log\left(1+\dfrac{2z^2\varphi'_\nu(z^2)}{\varphi_\nu(z^2)}\right)\right|\leq 3 \left|\dfrac{z^2\varphi'_\nu(z^2)}{\varphi_\nu(z^2)} \right|<\dfrac{3}{4}<1.
	\end{equation*}
	This completes the proof of the theorem.
\end{proof}

As a particular case of Theorem \ref{w}, the functions $2^\nu \Gamma(\nu+1) z^{1-\nu}J_{\nu}$ and $2^\nu \Gamma(\nu+1) z^{1-\nu}I_{\nu}$ belongs to $\se$ if $\nu\geq 17/12\approx 1.416$, where $J_\nu$ and $I_\nu$ are the Bessel function and the modified Bessel function of the first kind of order $\nu$ defined by \eqref{eqj} and \eqref{eqi} respectively. For instance, the functions
\[2^{3/2} \Gamma(5/2) z^{-1/2}J_{3/2}=\frac{3(\sin z -z\cos z)}{z^2}\]
and
\[2^{3/2} \Gamma(5/2) z^{-1/2}I_{3/2}=\frac{3(z\cosh z-\sinh z)}{z^2}\]
are in the class $\se$.

\section{Exponential Starlikeness  Associated with the $B_\kappa^c$-operator}\label{bkc}
In this section, we will be investigating the differential subordination results involving the $B_\kappa^c$-operator and its connection with exponential function. Throughout this section, we shall consider the function $\vartheta_{\nu}=z\varphi_{\nu}$ where $\varphi_\nu$ is given by \eqref{e2}.

\begin{theorem}\label{cor4}
	Let a function $f\in\mathcal{A}$  with $\re \kappa\geq \max \{2, |c|/4+(\imag \kappa)^2/6+3/2\}$.
	\begin{itemize}
		\item[(a)] If $f$ is a convex function and $B_{\kappa-1}^c f\in\se$, then $B_\kappa^c f\in\se$.
		\item[(b)] If $zf'$ is a convex function and $B_{\kappa-1}^c f\in\ke$, then $B_\kappa^c f\in\ke$.
	\end{itemize}
\end{theorem}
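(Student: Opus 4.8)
The plan is to establish part (a) by the admissible-function method of Lemma \ref{lemA} and then deduce part (b) from it by Alexander's duality. Write $g:=B_\kappa^c f$ and $G:=B_{\kappa-1}^c f$. Replacing $\kappa$ by $\kappa-1$ in the recurrence \eqref{b1} gives the first-order relation
\[ zg'(z)+(\kappa-2)g(z)=(\kappa-1)G(z)\qquad(z\in\mathbb{D}). \]
Putting $P(z):=zg'(z)/g(z)$ and using $zg'=Pg$ (so that $z^2g''=g\,(zP'+P^2-P)$), this relation transforms into
\[ \frac{zG'(z)}{G(z)}=P(z)+\frac{zP'(z)}{P(z)+\kappa-2}. \]
Because $G\in\se$, the left-hand member is subordinate to $e^z$; hence, on setting $\psi(r,s;z):=r+s/(r+\kappa-2)$ and $\Omega:=\exp(\mathbb{D})=\{w:|\log w|<1\}$, we have $\psi(P(z),zP'(z);z)\in\Omega$. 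The target is to invoke Lemma \ref{lemA} to conclude $P\prec e^z$, that is, $B_\kappa^c f\in\se$.

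Before Lemma \ref{lemA} can be applied, $P$ must be a genuine analytic function on $\mathbb{D}$ with $P(0)=1$, which forces $g$ to omit $0$ on $\mathbb{D}\setminus\{0\}$. I expect this to be exactly where the convexity of $f$ and the lower bound on $\re\kappa$ are consumed: since $\re\kappa\ge|c|/4+3/2>|c|/4+1$, Theorem \ref{thmB} gives $\varphi_\nu\in\mathcal{P}_e$, so $\varphi_\nu$ is a Carath\'eodory function and $\vartheta_\nu=z\varphi_\nu$ is nonvanishing on $\mathbb{D}\setminus\{0\}$; the convexity of $f$, through the convolution-invariance properties recalled earlier, should then keep $g=\vartheta_\nu\ast f$ nonvanishing there. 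The denominator $P+\kappa-2$ causes no trouble: the relation above shows $P+\kappa-2=(\kappa-1)G/g$, which is nonzero since $G\in\se$ and, at the origin, equals $\kappa-1\neq0$.

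The core of the argument is the verification of the admissibility condition, namely that
\[ \psi(r,s;z)=r+\frac{s}{r+\kappa-2}\notin\exp(\mathbb{D}) \]
whenever $r=e^{e^{i\theta}}$, $s=me^{i\theta}e^{e^{i\theta}}$, $\theta\in[0,2\pi)$ and $m\ge1$. Using $\exp(\mathbb{D})=\{w:|\log w|<1\}$ with the principal branch, it is enough to show $|\log\psi|\ge1$. Since
\[ \log\psi=\log r+\log\!\Big(1+\frac{me^{i\theta}}{r+\kappa-2}\Big)=e^{i\theta}+\log\!\Big(1+\frac{me^{i\theta}}{r+\kappa-2}\Big) \]
and $|e^{i\theta}|=1$, writing $L:=\log\big(1+me^{i\theta}/(r+\kappa-2)\big)$ reduces the claim to
\[ |\log\psi|^2-1=2\re\!\big(e^{-i\theta}L\big)+|L|^2\ge0. \]
Bounding $\re(e^{-i\theta}L)$ from below and analysing the resulting expression over $\theta\in[0,2\pi)$ and $m\ge1$ for a general complex $\kappa$ is the delicate computational step, and I expect it to be the principal obstacle, since it requires controlling the modulus and the argument of $1+me^{i\theta}/(r+\kappa-2)$ simultaneously. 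This is where the admissibility part of the threshold, $\max\{2,(\imag\kappa)^2/6+3/2\}$, is produced, the $(\imag\kappa)^2$ reflecting the presence of $\imag\kappa$ in the denominator $r+\kappa-2$; combined with the well-definedness requirement $\re\kappa\ge|c|/4+1$ from Theorem \ref{thmB}, it yields the stated bound $\re\kappa\ge\max\{2,\,|c|/4+(\imag\kappa)^2/6+3/2\}$.

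Finally, part (b) follows from part (a) via Alexander's duality $F\in\ke\iff zF'\in\se$ together with the commutation $z(B_\kappa^c f)'=B_\kappa^c(zf')$, which is immediate from the power-series definition of $B_\kappa^c$. Indeed, if $zf'$ is convex and $B_{\kappa-1}^c f\in\ke$, then $B_{\kappa-1}^c(zf')=z(B_{\kappa-1}^c f)'\in\se$; applying part (a) to the convex function $h:=zf'$ yields $B_\kappa^c(zf')=z(B_\kappa^c f)'\in\se$, that is, $B_\kappa^c f\in\ke$.
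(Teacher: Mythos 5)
Your reduction via \eqref{b1} to the relation $zG'/G=P+zP'/(P+\kappa-2)$ and your treatment of part (b) by Alexander duality coincide exactly with the paper's argument, but part (a) has two genuine gaps. First, the step you yourself flag as ``the principal obstacle'' --- verifying that $\psi(r,s;z)=r+s/(r+\kappa-2)$ stays outside $\exp(\mathbb{D})$ for $r=e^{e^{i\theta}}$, $s=me^{i\theta}e^{e^{i\theta}}$ --- is never carried out, and in your formulation it is the entire analytic content of the proof. The paper needs no such computation: since $e^z$ is convex univalent and $\re(e^z+\kappa-2)>0$ as soon as $\re\kappa\geq 2$, the subordination $P+zP'/(P+\kappa-2)\prec e^z$ yields $P\prec e^z$ directly by the Briot--Bouquet-type result \cite[Theorem 3.2(a), p.~81]{MR1760285}. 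Your expectation that the term $(\imag\kappa)^2/6$ would be ``produced'' by the admissibility analysis is also unfounded: that step, done either way, consumes only $\re\kappa\geq 2$.

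Second, your argument that $g=B_\kappa^c f$ is nonvanishing on $\mathbb{D}\setminus\{0\}$ (needed for $P$ to be analytic) does not work as written. Theorem \ref{thmB} gives $\re\varphi_\nu>0$, hence $\vartheta_\nu\neq 0$ off the origin, but the ``convolution-invariance properties recalled earlier'' (closure of $\se$ and $\ke$ under convolution with convex functions) say nothing about preserving mere nonvanishing, so nonvanishing of $\vartheta_\nu\ast f$ does not follow from them. This is precisely where the paper spends the full hypothesis $\re\kappa\geq\max\{2,|c|/4+(\imag\kappa)^2/6+3/2\}$: it is the condition of \cite[Theorem 2.12(b), p.~46]{MR2656410} guaranteeing that $\vartheta_\nu$ is \emph{starlike}, whence $B_\kappa^c f=\vartheta_\nu\ast f$ is starlike (convolution of a starlike function with a convex function is starlike), in particular nonvanishing on the punctured disk; this also settles the nonvanishing of $P+\kappa-2=(\kappa-1)G/g$. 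Your sketch could be repaired --- for instance, the Ruscheweyh--Sheil-Small convolution theorem applied with the starlike function $z$ shows $\re\big((\vartheta_\nu\ast f)(z)/z\big)>0$ when $f$ is convex and $\re\varphi_\nu>0$ --- but neither this repair nor the admissibility computation appears in your write-up, so the proof is incomplete at both of its load-bearing points.
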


\begin{proof}
	Note that $\vartheta_\nu$ is a starlike function by \cite[Theorem 2.12(b), p.~ 46]{MR2656410}. For the proof of part (a), since convolution of a starlike function and a convex function is starlike, therefore $B_{\kappa}^cf(z)\neq 0$ for all $0\neq z \in \mathbb{D}$. Let us define a function $p\colon\mathbb{D}\to\mathbb{C}$ by \[p(z)=\frac{z (B_{\kappa}^cf(z))'}{B_{\kappa}^cf(z)}. \]Then $p$ is analytic in $\mathbb{D}$ with $p(0)=1$. Using \eqref{b1}, we have
	\[\dfrac{B_{\kappa-1}^cf(z)}{B_{\kappa}^cf(z)} =\dfrac{1}{\kappa-1}(p(z)+\kappa-2) \]
	which gives
	\[\frac{z (B_{\kappa-1}^cf(z))'}{B_{\kappa-1}^cf(z)}=p(z)+\dfrac{zp'(z)}{p(z)+\kappa-2}. \]
	Since $B_{\kappa-1}^c f\in\se$, we have
	\[p(z)+\dfrac{zp'(z)}{p(z)+\kappa-2}\prec e^z. \]
	As  $\re(e^z)>0$ for all $z\in\mathbb{D}$ and $\re \kappa\geq2$, we must have $\re(e^z+\kappa-2)>0$ for all $z\in \mathbb{D}$. Since $e^z$ is convex univalent in $\mathbb{D}$, we have $p(z)\prec e^z$ using  \cite[Theorem 3.2(a), p.~81]{MR1760285}. Hence it follows that $B_\kappa^c f\in\se$.
	
	For the proof of part (b), let $B_{\kappa-1}^c f\in\ke$. Alexander duality theorem between the classes $\ke$ and $\se$ gives $z(B_{\kappa-1}^c f)'=B_{\kappa-1}^c (zf')\in\se$. By part (a), $B_{\kappa}^c (zf')=z(B_{\kappa}^c  f)'\in\se$ as $zf'$ is a convex function. By again applying Alexander duality theorem, it follows that $B_{\kappa}^c f\in\ke$.
\end{proof}

Let $f(z)=z/(1-z)\in\mathcal{A}$. Then $f$ is a convex function and $B_{\kappa}^cf=\vartheta_\nu$. If $\mathcal{J}_\nu$ and $\mathcal{I}_\nu$ are the generalized and normalized forms of Bessel function and modified Bessel function of the first kind of order $\nu$ respectively as defined in Corollary \ref{cora}, then Theorem \ref{cor4} takes the following form.
\begin{corollary}\label{cor5}
	Let $\nu\in\mathbb{R}$ with $\nu\geq 1$. If $z\mathcal{J}_\nu \in \se$, then $z\mathcal{J}_{\nu+1}\in\se$. Similarly, if $z\mathcal{I}_\nu \in \se$, then $z\mathcal{I}_{\nu+1}\in\se$.
\end{corollary}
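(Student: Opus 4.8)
The plan is to derive both implications as direct specializations of Theorem \ref{cor4}(a) to the convex generator $f(z) = z/(1-z)$, exactly as signalled by the sentence preceding the statement. First I would record the two facts about this $f$: it maps $\mathbb{D}$ onto the half-plane $\re w > -1/2$ and is therefore convex, and since $z/(1-z) = \sum_{n \geq 1} z^n$ is the convolution identity on $\mathcal{A}$, one has $B_\kappa^c f = \vartheta_\nu \ast (z/(1-z)) = \vartheta_\nu = z\varphi_\nu$ for every admissible $\kappa$, $c$.

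Next I would specialize the parameters to the Bessel case $b = c = 1$ (respectively the modified Bessel case $b = 1$, $c = -1$), for which $\kappa = \nu + (b+1)/2 = \nu + 1$, $|c| = 1$, and $\varphi_\nu = \mathcal{J}_\nu$ (resp. $\mathcal{I}_\nu$) in the notation of Corollary \ref{cora}. With these choices $B_{\nu+1}^c f = z\mathcal{J}_\nu$ and $B_{\nu+2}^c f = z\mathcal{J}_{\nu+1}$ (and likewise with $\mathcal{I}$), so the desired implication ``$z\mathcal{J}_\nu \in \se \Rightarrow z\mathcal{J}_{\nu+1} \in \se$'' is precisely the assertion ``$B_{\kappa-1}^c f \in \se \Rightarrow B_\kappa^c f \in \se$'' of Theorem \ref{cor4}(a) read with $\kappa = \nu + 2$.

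It then remains to check that $\kappa = \nu + 2$ fulfils the hypothesis $\re\kappa \geq \max\{2, |c|/4 + (\imag\kappa)^2/6 + 3/2\}$ of Theorem \ref{cor4}. Because $\nu \in \mathbb{R}$ and $c = \pm 1$, we have $\imag\kappa = 0$ and $|c| = 1$, so the right-hand side equals $\max\{2, 1/4 + 3/2\} = 2$ and the requirement collapses to $\nu + 2 \geq 2$, i.e.\ $\nu \geq 0$; this is comfortably implied by the hypothesis $\nu \geq 1$. Feeding $f = z/(1-z)$ and this parameter choice into Theorem \ref{cor4}(a) then yields both stated implications simultaneously.

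I expect the only delicate point to be index bookkeeping rather than any analytic estimate: one must keep straight that the shift $\kappa = \nu + 1$ forces the use of $\kappa = \nu + 2$ when passing from order $\nu$ to order $\nu + 1$, and one must verify the convolution identity $B_\kappa^c(z/(1-z)) = \vartheta_\nu$. All of the genuine analysis has already been absorbed into Theorem \ref{cor4}, so no new inequality needs to be established here.
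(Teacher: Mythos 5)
Your proposal is correct and takes essentially the same route as the paper, which likewise obtains Corollary \ref{cor5} by specializing Theorem \ref{cor4}(a) to the convolution identity $f(z)=z/(1-z)$ (so that $B_{\kappa}^{c}f=\vartheta_\nu$) with $b=1$, $c=\pm1$, $\kappa=\nu+1$. Your index bookkeeping is in fact slightly sharper than the paper's: reading the theorem with $\kappa=\nu+2$ shows that $\nu\geq 0$ already suffices, so the stated hypothesis $\nu\geq 1$ is more than enough.
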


Since $z\mathcal{J}_{3/2}$ and $z\mathcal{I}_{3/2}$ are members of $\se$ (see the discussion succeeding Corollary \ref{cora}), therefore Corollary \ref{cor5} shows that $z\mathcal{J}_{(3/2)+n}\in \se$ and $z\mathcal{I}_{(3/2)+n}$ $\in \se$ for all $n\in \mathbb{N}$.

Now, we will define a class $\Phi_C(\Omega,q)$ of admissible functions and use it to prove the corresponding theorem for differential subordination, as done by Miller and Mocanu \cite{MR1760285} for proving Theorem \ref{thm1} using Definition \ref{def2}. Throughout the discussion, it is assumed that the terms appearing in the denominator are non-zero so that all the expressions are well-defined.
\begin{definition}
	Let $\Omega$ be any subset of $\mathbb{C}$ and $q\in\mathcal{Q}_1$. The class of admissible functions $\Phi_{C}(\Omega,q)$ consists of those functions $\phi\colon \mathbb{C}^3\times\mathbb{D}\to\mathbb{C}$ that satisfies the following admissiblity condition	 \[\phi(u,v,w;z) \notin \Omega\] whenever \[u=q(\zeta) \quad\text{and}\quad v	= q(\zeta)+\frac{m\zeta q'(\zeta)}{q(\zeta)}\] and \[  \re\left(\frac{1-2u^2+3uv-2v+(1-v)w}{u-v}\right) \geq m \re\left( 1+\frac{\zeta q''(\zeta)}{q'(\zeta)} \right)\] where $z\in\mathbb{D}$, $\zeta \in \partial \mathbb{D}\setminus \textbf{E}(q)$ and $m \geq 1$ is a positive integer.
\end{definition}
\begin{theorem}\label{thm5}
	Let $\phi\in\Phi_{C}(\Omega,q)$. If a function $f\in\mathcal{A}$  satisfies the following inclusion relation\begin{equation}
	\label{e16}
	\left\{\phi\left(\frac{z(B_{\kappa}^cf(z))'}{B_{\kappa}^cf(z)}, 1+\frac{z(B_{\kappa}^cf(z))''}{(B_{\kappa}^cf(z))'},1+\frac{z(B_{\kappa}^cf(z))'''}{(B_{\kappa}^cf(z))''};z\right)\colon z\in\mathbb{D}\right\}\subset \Omega
	\end{equation}
	then \begin{equation*}
	\frac{z(B_{\kappa}^cf(z))'}{B_{\kappa}^cf(z)}\prec q(z)\qquad(z\in\mathbb{D}).
	\end{equation*}
\end{theorem}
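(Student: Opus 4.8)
The plan is to reduce Theorem~\ref{thm5} to the foundational subordination result Theorem~\ref{thm1}. Writing $g:=B_\kappa^c f$ and
$$p(z):=\frac{z(B_\kappa^c f(z))'}{B_\kappa^c f(z)}=\frac{zg'(z)}{g(z)},$$
the function $p$ is analytic in $\mathbb{D}$ with $p(0)=1$ under the standing assumption that the relevant denominators do not vanish. The three arguments fed to $\phi$ in \eqref{e16} are precisely $u=zg'/g$, $v=1+zg''/g'$ and $w=1+zg'''/g''$, so the first task is to rewrite $v$ and $w$ in terms of $p$, $zp'$, $z^2p''$, and then to exhibit $\phi$ as an admissible function $\psi\in\Psi(\Omega,q)$ in the sense of Definition~\ref{def2}.

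First I would apply logarithmic differentiation to $p=zg'/g$, which gives
$$v=1+\frac{zg''(z)}{g'(z)}=p(z)+\frac{zp'(z)}{p(z)}.$$
Setting $A:=v-1=zg''/g'$ and repeating the same operation on $A$ yields $w=1+zg'''/g''=A+zA'/A$, that is,
$$w=(v-1)+\frac{zv'}{v-1},\qquad\text{equivalently}\qquad (1-v)w=-(v-1)^2-zv'.$$
Substituting this last relation into the numerator occurring in the admissibility condition defining $\Phi_C(\Omega,q)$ and simplifying is the crux of the argument: I expect the factorization
$$1-2u^2+3uv-2v+(1-v)w=-(2u-v)(u-v)-zv',$$
and, since $u-v=-zp'/p$ and $zv'=zp'+\frac{zp'}{p}+\frac{z^2p''}{p}-\frac{(zp')^2}{p^2}$, dividing by $u-v$ should collapse everything to
$$\frac{1-2u^2+3uv-2v+(1-v)w}{u-v}=1+\frac{z^2p''(z)}{zp'(z)}.$$

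With these identities in hand I would define $\psi\colon\mathbb{C}^3\times\mathbb{D}\to\mathbb{C}$ by $\psi(r,s,t;z):=\phi(u,v,w;z)$, where $u=r$, $v=r+s/r$, and $w=(v-1)+zv'/(v-1)$ is expressed through $r,s,t$ via $zv'=s+\frac{s}{r}+\frac{t}{r}-\frac{s^2}{r^2}$. Evaluating on the boundary with $r=q(\zeta)$ and $s=m\zeta q'(\zeta)$ turns $u$ into $q(\zeta)$ and $v$ into $q(\zeta)+m\zeta q'(\zeta)/q(\zeta)$, which are exactly the first two requirements in the definition of $\Phi_C(\Omega,q)$, while the key identity converts the Miller--Mocanu condition $\re(1+t/s)\ge m\,\re(1+\zeta q''(\zeta)/q'(\zeta))$ into the third requirement $\re\bigl((1-2u^2+3uv-2v+(1-v)w)/(u-v)\bigr)\ge m\,\re(1+\zeta q''(\zeta)/q'(\zeta))$. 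Hence $\phi\in\Phi_C(\Omega,q)$ is precisely $\psi\in\Psi(\Omega,q)$. Since hypothesis \eqref{e16} reads $\psi(p(z),zp'(z),z^2p''(z);z)\in\Omega$, Theorem~\ref{thm1} yields $p(z)\prec q(z)$, which is the assertion.

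The main obstacle is the purely computational identity in the second paragraph: correctly carrying out the two successive logarithmic differentiations and then verifying the collapse of the $\Phi_C$-numerator to $1+z^2p''/(zp')$. The payoff of this bookkeeping is that the somewhat opaque admissibility condition defining $\Phi_C(\Omega,q)$ is revealed to be nothing more than the translation of the standard condition of Definition~\ref{def2} under the substitution $v=p+zp'/p$, $w=(v-1)+zv'/(v-1)$; once this is established, the non-vanishing of $u-v=-zp'/p$ together with the standing assumptions guarantees that $\psi$ is well-defined, and the reduction to Theorem~\ref{thm1} is immediate.
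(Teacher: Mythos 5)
Your proposal is correct and follows essentially the same route as the paper: define $p(z)=z(B_\kappa^c f(z))'/B_\kappa^c f(z)$, express the second and third arguments of $\phi$ through $p$, $zp'$, $z^2p''$ via the transformation $u=r$, $v=r+s/r$, $w=(t+3sr-s+r^3-2r^2+r)/(s+r^2-r)$, verify that $1+t/s$ equals the quantity $\bigl(1-2u^2+3uv-2v+(1-v)w\bigr)/(u-v)$ appearing in the definition of $\Phi_C(\Omega,q)$, and conclude by Theorem~\ref{thm1}. Your factorization $1-2u^2+3uv-2v+(1-v)w=-(2u-v)(u-v)-zv'$ and the double logarithmic-differentiation formula $w=(v-1)+zv'/(v-1)$ are just a more explicit verification of the identity the paper states without computation; both check out.
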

\begin{proof}
	Define an analytic function $p\colon\mathbb{D}\to\mathbb{C}$ by $p(z)=z(B_\kappa^c f(z))'/B_\kappa^c f(z)$. A straightforward calculation gives
	\[1+\frac{z(B_\kappa^cf(z))''}{(B_\kappa^c f(z))'}=p(z)+\frac{zp'(z)}{p(z)}\]
	and	
	\[1+\frac{z(B_\kappa^cf(z))'''}{(B_\kappa^c f(z))''}=\frac{z^2 p''(z)+3 z p(z)p'(z)- z p'(z)  + p^3(z) - 2 p^2(z)+p(z)  }{ zp'(z)+p^2(z)-p(z)}.\]
	Let us define the transformation from $\mathbb{C}^3$ to $\mathbb{C}$ by
	\[u:=r, \quad v :=r+\frac{s}{r}\quad\text{and}\quad w:=\frac{t+3sr-s+r^3-2r^2+r}{s+r^2-r}. \]
	If we write
	\[\psi(r,s,t;z)=\phi(u,v,w;z) =\phi\left(r,r+\frac{s}{r},\frac{t+3sr-s+r^3-2r^2+r}{s+r^2-r};z \right)\]
	then we have
	\begin{align*} & \psi(p(z),zp'(z),z^2p''(z);z) \\ &=\phi\left(\frac{z(B_\kappa^c f(z))'}{B_\kappa^c f(z)},1+\frac{z(B_\kappa^cf(z))''}{(B_\kappa^c f(z))'},1+\frac{z(B_\kappa^cf(z))'''}{(B_\kappa^c f(z))''};z\right).\end{align*}
	Therefore \eqref{e16} can be rewritten as $\psi(p(z),zp'(z),z^2p''(z);z)\in\Omega$ for all $z\in \mathbb{D}$.
	Also observe that \begin{equation*}
	1+\frac{t}{s}=\frac{1-2u^2+3uv-2v+(1-v)w}{u-v}.
	\end{equation*}
	Hence the admissibility condition for $\phi\in\Phi_{C}(\Omega,q)$ is equivalent to the admissibility condition for $\psi$ as given in Definition \ref{def2}. Evidently Theorem \ref{thm1} gives  that $p(z)\prec q(z)$ and this completes the proof.		
\end{proof}
If $\Omega\ne\mathbb{C}$ is a simply connected domain, then \eqref{e16} can be written in terms of subordination so that Theorem \ref{thm5} can be reformulated and extended to the case when the behaviour of the function $q$ on the boundary of $\mathbb{D}$ is not known and the best dominants of the resulting differential subordination can be determined. The statements and proof of these results are similar to \cite[Theorem 2.3d, p.~ 30]{MR1760285} and \cite[Theorem 2.3e, p.~ 31]{MR1760285}, therefore the details are omitted.

The class of admissible functions $\Phi_{C}(\Omega,q)$ with $q(z)=e^z$ is denoted by $\Phi_{C}(\Omega,e^z)$ which is characterized by the following theorem in two-dimension.
\begin{theorem}\label{th1}
	Let $\Omega$ be any set in $\mathbb{C}$ and suppose that the function
	$\phi\colon \mathbb{C}^2\times \mathbb{D} \to\mathbb{C}$  satisfies the admissibility condition \[\phi(e^{e^{i\theta}},e^{e^{i\theta}}+me^{i\theta};z) \notin \Omega\]
	where $z\in\mathbb{D}$, $\theta\in[0,2\pi)$ and $m \geq 1$ is a positive integer. If $f\in\mathcal{A}$ satisfies the relation
	\begin{equation*}
	\left\{\phi\left(\frac{z(B_{\kappa}^cf(z))'}{B_{\kappa}^cf(z)}, 1+\frac{z(B_{\kappa}^cf(z))''}{(B_{\kappa}^cf(z))'};z\right)\colon z\in\mathbb{D}\right\}\subset \Omega
	\end{equation*}
	then $B_\kappa^c f\in\se$.
\end{theorem}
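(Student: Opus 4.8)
The plan is to read this as the two-variable specialization of Theorem \ref{thm5} at $q(z)=e^z$, and to deduce it from the two-dimensional form of Lemma \ref{lemA} in exactly the way Theorems \ref{thmB} and \ref{thmA} were obtained. First I would set
\[
p(z)=\frac{z(B_\kappa^cf(z))'}{B_\kappa^cf(z)},
\]
which is analytic in $\mathbb{D}$ with $p(0)=1$, the denominator being nonzero for $z\neq0$ by the standing convention preceding Theorem \ref{thm5}. The only computational ingredient I need is the identity
\[
1+\frac{z(B_\kappa^cf(z))''}{(B_\kappa^cf(z))'}=p(z)+\frac{zp'(z)}{p(z)},
\]
which is already established in the proof of Theorem \ref{thm5}; thus no fresh manipulation of the operator $B_\kappa^c$ is required.

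Next I would define $\psi\colon\mathbb{C}^2\times\mathbb{D}\to\mathbb{C}$ by $\psi(r,s;z)=\phi(r,r+s/r;z)$. With this choice the hypothesis of the theorem reads $\psi(p(z),zp'(z);z)\in\Omega$ for all $z\in\mathbb{D}$, because $\psi(p(z),zp'(z);z)=\phi\!\big(p(z),p(z)+zp'(z)/p(z);z\big)$. The heart of the argument—more a verification than a genuine obstacle—is to confirm that the admissibility condition of the two-dimensional Lemma \ref{lemA} for $\psi$ is exactly the admissibility condition imposed on $\phi$ in the statement. Substituting $r=e^{e^{i\theta}}$ and $s=me^{i\theta}e^{e^{i\theta}}$ gives $r+s/r=e^{e^{i\theta}}+me^{i\theta}$, so that
\[
\psi\big(e^{e^{i\theta}},\,me^{i\theta}e^{e^{i\theta}};z\big)=\phi\big(e^{e^{i\theta}},\,e^{e^{i\theta}}+me^{i\theta};z\big)\notin\Omega,
\]
which is precisely the stated hypothesis. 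Hence $\psi$ satisfies the requirement of Lemma \ref{lemA}.

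Applying Lemma \ref{lemA} then yields $p\in\mathcal{P}_e$, that is, $z(B_\kappa^cf)'/B_\kappa^cf\prec e^z$, which is exactly the assertion $B_\kappa^cf\in\se$. I do not expect any real difficulty here: the whole proof is the composition of the substitution $s\mapsto r+s/r$ with Lemma \ref{lemA}, and the single point demanding care is checking that this substitution carries the exponential admissibility data of Lemma \ref{lemA} onto the data prescribed for $\phi$, together with the bookkeeping that the denominators $p(z)$ and $s+r^2-r$ do not vanish so that all expressions are well-defined.
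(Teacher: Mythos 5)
Your proposal is correct and is essentially the paper's argument: the paper obtains Theorem \ref{th1} as the two-variable specialization of Theorem \ref{thm5} with $q(z)=e^z$, whose proof consists of exactly your substitution $\psi(r,s;z)=\phi\left(r,r+s/r;z\right)$ applied to $p(z)=z(B_\kappa^cf(z))'/B_\kappa^cf(z)$, with Lemma \ref{lemA} being precisely the $e^z$-instance of the underlying admissibility machinery (Theorem \ref{thm1}). Inlining the argument through Lemma \ref{lemA} rather than citing Theorem \ref{thm5} is only a difference in packaging, and your attention to the nonvanishing of the denominators matches the paper's standing convention.
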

As an application, we provide the following two examples that illustrate Theorem \ref{th1}. This, in turn, give sufficient conditions in the form of differential inequalities for a function $B_{\kappa}^cf$ to be in $\se$.
\begin{example}\label{ex3.6}
	Define a function $\phi(u,v;z):=(1-\alpha)u+\alpha v$ where $\alpha> 1/e$ and let $h:\mathbb{D}\to \mathbb{C}$ be defined as
	\[h(z)=1+\frac{(\alpha e-1)}{e}z.\] Then $\Omega=h(\mathbb{D})=\{w\in\mathbb{C} \colon|w-1|<(\alpha e-1)/e \}$. We want to show that $\phi\in\Phi_{C}(\Omega,e^z)$. For $z\in\mathbb{D}$, $\theta\in[0,2\pi)$ and $m \geq 1$, consider
	\begin{align*}
	|\phi(e^{e^{i\theta}},e^{e^{i\theta}}+me^{i\theta};z)-1|^2
	&=\left|e^{e^{i\theta}}+\alpha m {e^{i\theta}}-1\right|^2\\
	& =(e^{\cos\theta}\cos(\sin\theta)+\alpha m \cos\theta -1)^2\\
	&\quad+(e^{\cos\theta}\sin(\sin\theta)+\alpha m \sin\theta )^2=:\ell_1(\theta).
	\end{align*}
	The function $\ell_1$ attains its minimum value at $\theta=\pi$ so that
	\[|\phi(e^{e^{i\theta}},e^{e^{i\theta}}+me^{i\theta};z)-1|  \geq \alpha m -\frac{1}{e} +1\geq \alpha-\frac{1}{e}+1>\alpha-\frac{1}{e}.\]
	Therefore the admissibility condition given in Theorem \ref{th1} is satisfied. Hence we conclude that if $f\in\mathcal{A}$, $\alpha> 1/e$ and \[\left|(1-\alpha)\frac{z(B_\kappa^cf(z))'}{B_\kappa^cf(z)}+\alpha\left(1+\frac{z(B_\kappa^cf(z))''}{(B_\kappa^cf(z))'}\right)-1\right|<\alpha-\frac{1}{e}\] then $B_\kappa^cf\in \se$. In the particular case, if $f(z)=z/(1-z)$ and $\alpha=1$, the following implication holds.
	\[ \left|\frac{z\vartheta''_\nu(z)}{\vartheta'_\nu(z)}\right| <1-\frac{1}{e} \implies  \left|\log \frac{z\vartheta'_\nu(z)}{\vartheta_\nu(z)} \right|<1 \quad (z\in \mathbb{D}). \]
	By Corollary \ref{cora}, it is not possible to prove that $\vartheta_{-5/2,1,1}=z\mathcal{J}_{-5/2}$ and $\vartheta_{-5/2,1,-1}=z\mathcal{I}_{-5/2}$ are in $\se$. However, the preceding implication can be used to prove the same. To see this, note that the function
	\[\frac{z\vartheta''_{-5/2,1,1}(z)}{\vartheta'_{-5/2,1,1}(z)}=\frac{z((4+z)\cos\sqrt{z}+4\sqrt{z}\sin \sqrt{z})}{2((6-z)\cos \sqrt{z}+\sqrt{z}(6+z)\sin \sqrt{z})}\]
	lies inside the circle $|w|=1-1/e$ (see Figure \ref{fig:3}). Thus it follows that
	\[\vartheta_{-5/2,1,1}(z)=z \cos \sqrt{z}-\frac{1}{3}z^2 \cos \sqrt{z}+z\sqrt{z}\sin\sqrt{z}\in\se.\]
	Similarly, it can be shown that
	\[\vartheta_{-5/2,1,-1}(z)=z \cosh \sqrt{z}+\frac{1}{3}z^2 \cosh \sqrt{z}-z\sqrt{z}\sinh\sqrt{z}\in\se.\]
	
	\begin{figure}[h]
		\begin{center}
			\includegraphics[width=2.4in]{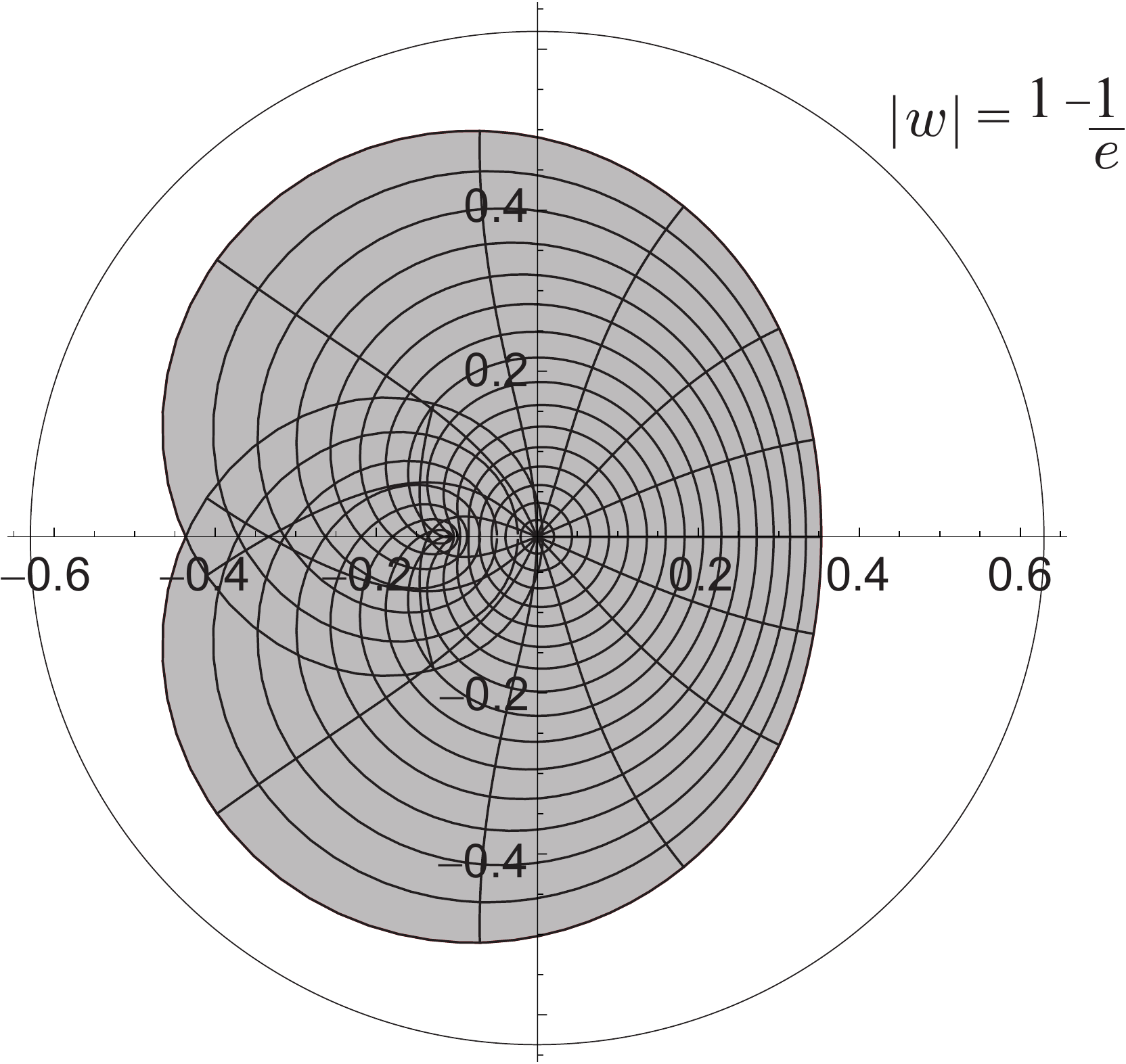}
			\caption{Image of $z\vartheta''_{-5/2,1,1}(z)/\vartheta'_{-5/2,1,1}(z)$ under $\mathbb{D}$.}
			\label{fig:3}
		\end{center}
	\end{figure}
\end{example}

\begin{example}
	Let $\phi(u,v;z):=u v$. For $z\in\mathbb{D}$, $\theta\in[0,2\pi)$ and $m \geq 1$, consider
	\begin{align*}
	|\phi(e^{e^{i\theta}},e^{e^{i\theta}}+me^{i\theta};z)-1|^2&=\left|e^{2e^{i\theta}}+ m {e^{i\theta}}e^{e^{i\theta}}-1\right|^2\\
	& =(me^{\cos\theta}\cos(\theta+\sin\theta)+e^{2\cos\theta}\cos(2\sin\theta)-1)^2\\
	&\quad\;+(me^{\cos\theta}\sin(\theta+\sin\theta)+e^{2\cos\theta}\sin(2\sin\theta))^2 \\ &=:\ell_2(\theta).
	\end{align*}
	The function $\ell_2$ attains its minimum at $\theta=\pi$ by applying the second derivative test and
	\[\min_{\theta\in[0,2\pi)}g(\theta)=g(\pi)=\left(\frac{-m}{e}+\frac{1}{e^2}-1\right)^2\]
	so that
	\[|\phi(e^{e^{i\theta}},e^{e^{i\theta}}+me^{i\theta};z)-1|\geq \frac{m}{e}-\frac{1}{e^2}+1 \geq  \frac{1}{e}-\frac{1}{e^2}+1.\]
	Therefore $\phi(e^{e^{i\theta}},e^{e^{i\theta}}+me^{i\theta};z)\not\in \Omega$ whenever $z\in\mathbb{D}$, $\theta\in[0,2\pi)$ and $m \geq 1$ where
	\[\Omega:=\left\{w\in\mathbb{C}\colon|w-1|<\frac{1}{e}-\frac{1}{e^2}+1\right\}.\]
	Hence by Theorem \ref{th1}, it follows that if $f\in\mathcal{A}$ and \[\left|\frac{z(B_\kappa^cf(z))'}{B_\kappa^cf(z)}\left(1+\frac{z(B_\kappa^cf(z))''}{(B_\kappa^cf(z))'}\right)-1\right|<\frac{1}{e}-\frac{1}{e^2}+1\] then $B_\kappa^cf\prec \se$. In particular, we have
	\[ \left|\frac{z\vartheta'_\nu(z)}{\vartheta_\nu(z)}\left(1+\frac{z\vartheta''_\nu(z)}{\vartheta'_\nu(z)}\right)-1\right|<\frac{1}{e}-\frac{1}{e^2}+1\implies \left|\log \frac{z\vartheta_\nu'(z)}{\vartheta_\nu(z)} \right|<1 \quad (z\in \mathbb{D}). \]
\end{example}
In Example \ref{ex3.6}, we proved that $\vartheta_{-5/2,1,1}\in \se$. However, the functions
\[\vartheta_{-3/2,1,1}(z)=z(\cos \sqrt{z}+\sqrt{z}\sin \sqrt{z})\]
and
\[\vartheta_{-1/2,1,1}(z)=z\cos\sqrt{z}\]
are not members of $\se$. Infact, the function $\vartheta_{-3/2,1,1}$ does not even lie in the class $\mathcal{S}$. Although the function $\vartheta_{-1/2,1,1}$ is univalent \cite[Corollary 2.9, p.~57]{MR2656410}, the figure of the image of $z\vartheta'_{-1/2,1,1}/\vartheta_{-1/2,1,1}$ depicted in Figure \ref{fig:4} clearly proves that the function $\vartheta_{-1/2,1,1}$ does not belong to $\se$. This shows that Corollary \ref{cor5} is valid only for certain range of values of $\nu\in \mathbb{R}$.
\begin{figure}[h]
	\begin{center}
		\includegraphics[width=2.4in]{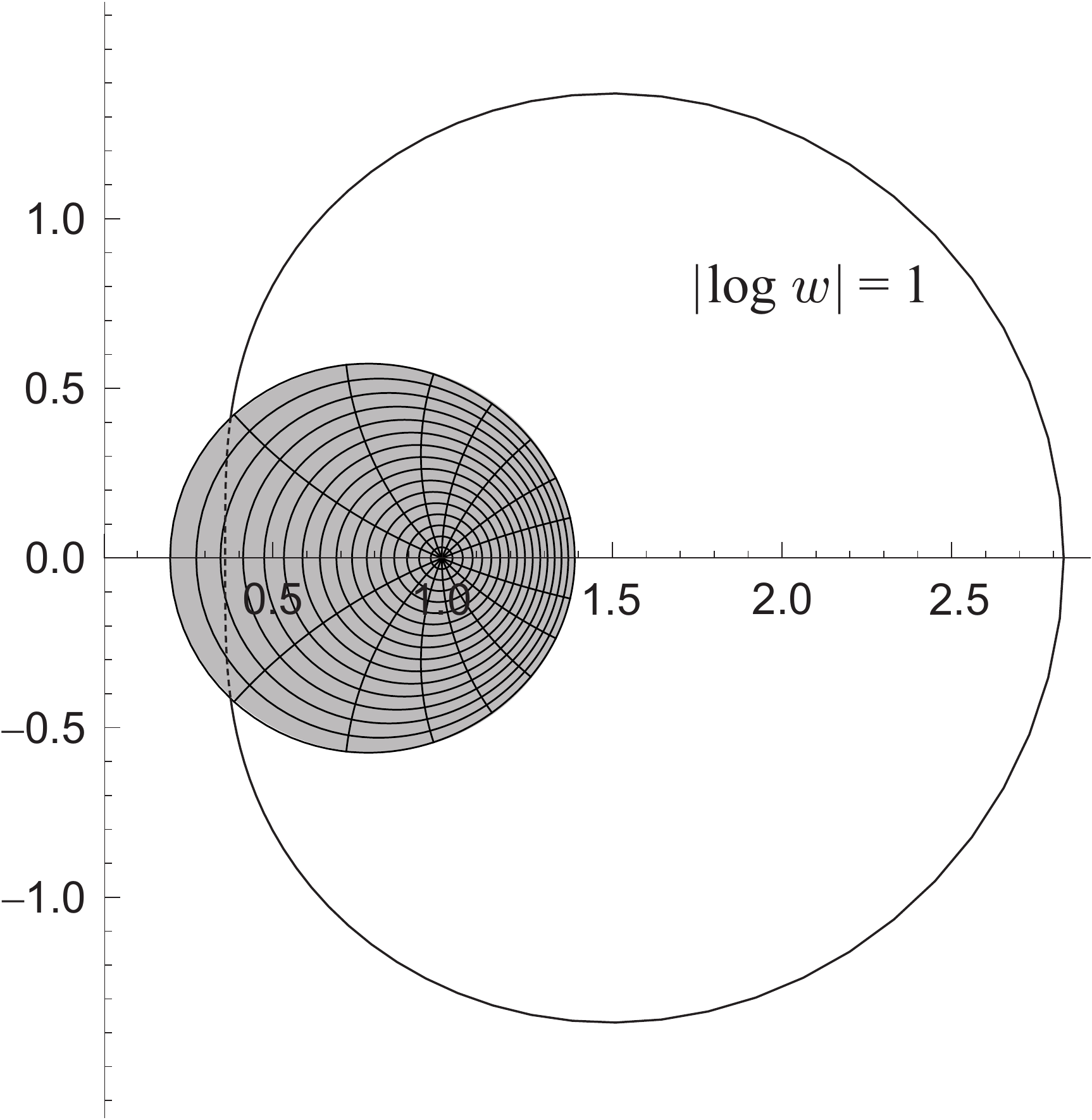}
		\caption{Image of $z\vartheta''_{-1/2,1,1}(z)/\vartheta'_{-1/2,1,1}(z)$ under $\mathbb{D}$.}
		\label{fig:4}
	\end{center}
\end{figure}

\end{document}